\newcommand{\R}{\mathbb R}
\newtheorem{remark}[theorem]{Remark}
\title{Conservative methods for stochastic differential equations with a conserved quantity
\thanks{C.~Chen and J.~Hong are supported by National Natural Science Foundation of China (NO. 91130003, NO. 11021101 and NO. 11290142).
D.~Cohen is supported by UMIT Research Lab at Ume{\aa} University and the Swedish Research Council (VR).}}
\author{Chuchu Chen\thanks{Institute of Computational Mathematics and
Scientific/Engineering Computing, Chinese Academy of Sciences, Beijing, PR China. ({\tt chenchuchu@lsec.cc.ac.cn})({\tt hjl@lsec.cc.ac.cn}).}
        \and David Cohen\thanks{Matematik och matematisk statistik, Ume{\aa} universitet, 90187 Ume{\aa}, Sweden.({\tt david.cohen@math.umu.se})}
        \and Jialin Hong$^{\dag}$}
\begin{document}

\maketitle

\begin{abstract}
This paper proposes a novel conservative method for numerical computation of
general stochastic differential equations in the Stratonovich sense
with a conserved quantity.
We show that the mean-square order of the method is $1$ if
noises are commutative and that the weak order is also $1$.
Since the proposed method may need the computation of a
deterministic integral, we analyse the effect
of the use of quadrature formulas on the convergence orders.
Furthermore, based on the splitting technique of stochastic vector fields,
 we construct conservative
composition methods with similar orders as the above method.
Finally, numerical experiments are presented to support
 our theoretical results.
\end{abstract}

\begin{keywords}
stochastic differential equations, invariants,
conservative methods, quadrature formula, splitting technique,
mean-square convergence order, weak convergence order
\end{keywords}

\begin{AMS}
60H10, 60H35, 65C20, 65C30, 65D30
\end{AMS}

\pagestyle{myheadings}
\thispagestyle{plain}
\markboth{C.~Chen, D.~Cohen and J.~Hong}{Conservative methods for SDEs}

\section{Introduction}
\label{intro}
In this paper, we consider general $d$-dimensional autonomous stochastic differential equations (SDE)
in the Stratonovich sense
\begin{equation}\label{eq1}
 \mathrm d X(t)=f(X(t))\,\mathrm  dt+\sum_{r=1}^{m}g_{r}(X(t))\circ\,\mathrm  dW_{r}(t),
\quad 0\leq t\leq T, \quad X(0)=X_{0},
\end{equation}
where $W_{r}(t)$, $r=1,\cdots, m$ are $m$ independent one-dimensional Brownian motions,
defined on a complete probability space $(\Omega, \mathcal{F}, \{\mathcal{F}_{t}\}_{t\geq 0}, P)$.
The initial value $X_{0}$ is $\mathcal{F}_{t_{0}}$-measurable with $E|X_{0}|^{2}<\infty$.
Here, $f\colon\R^d\to\R^d$ and $g_{r}\colon\R^d\to\R^d$ are such that the above problem possesses a unique
solution.
The studies of SDE \eqref{eq1} have drawn dramatic attentions due to its applications in physics, engineering, economics, etc., concerning
the effects of random-phenomena.
 Furthermore, we will assume that equation \eqref{eq1} possesses a scalar conserved
quantity $I(x)$, which means that $\text dI(X(t))=0$ along the exact solution $X(t)$ of \eqref{eq1},
e.~g. see \cite{b81,Cohen2013,Faou2009,Zhang2011,Misawa1999} and references therein for the applications and studies of conservative SDE. Our aim is to derive and
analyse numerical methods for \eqref{eq1} preserving this conserved quantity.

Finding numerical solutions of stochastic differential equations is an active ongoing research area,
see the review paper \cite{Burrage2004}, the monographs \cite{Kloeden1992,Milstein1995}
and references therein for instance. Further, it is important to design numerical schemes
which preserve the properties of the original problems as much as possible.
References \cite{Abdulle2012,c12,Malham2008,Milstein2002a,Milstein2002,Moro2007,Schurz1999b,Schurz1999a,Wang},
without being exhaustive, show general improvements
of these so-called geometric numerical methods over more traditional numerical schemes
such as Euler-Maruyama's method or the Milstein scheme.

Concerning our problem \eqref{eq1} with a conserved quantity, \cite{Misawa1999} develops
a method to derive conserved quantities from symmetry of SDEs in Stratonovich sense.
Further, \cite{Misawa2000} proposes an energy-preserving method for stochastic Hamiltonian dynamical systems
and presents the local error order of the method. The recent work \cite{Cohen2013} proposes
a new energy-preserving scheme for stochastic Poisson systems with non-canonical structure matrix
and shows that the mean-square convergence order of the scheme is $1$.
For general SDEs driven by one-dimensional Brownian motion in Stratonovich sense,
the authors of \cite{Zhang2011} propose two conservative methods by means of
the skew gradient form of the original SDEs (see below for more details).
They also prove that these two methods are convergent with accuracy $1$
in the mean-square sense.
Based on these two last references, we propose new conservative numerical methods
for general stochastic differential equations with a conserved quantity in the present paper.


Since the problem of computing expectations of functionals
of solutions to SDEs appears in many applications \cite{talay84}, for example:
in finance \cite{RAT04}, in random mechanics \cite{SPA99},
or in bio-chemistry \cite{Gardiner:1985:HSM};
we will not only derive the mean-square, but also weak
convergence orders of new invariant-preserving numerical methods.
Comparing our scheme with the Milstein method, we prove that the mean-square
convergence order of our method is $1$ under the condition of commutative noise.
Furthermore, without assuming any commutativity condition,
we show that the weak convergence order of our method is $1$.
Since the proposed method may need the computation of a deterministic integral,
we will also analyse the effect of the use of quadrature formulas on convergence orders.
We will show that if the order of a quadrature formula is greater than $2$,
the mean-square and weak orders of our method remains $1$.
Based on the splitting technique of stochastic vector fields, we derive new invariant-preserving composition methods of mean-square
order one (in the commutative case) and weak order one.

This paper is organized as follows. Section~\ref{sec-meth}
presents the skew gradient form of the problem and derive the proposed invariant-preserving scheme.
Properties of the numerical scheme are analyzed in Section~\ref{sec-prop}.
The effects of quadrature formula on the mean-square and weak convergence orders
and on the discrete conserved quantity are investigated in Section~\ref{sec-qf}.
Section~\ref{sec-split} deals with the splitting technique of stochastic vector field.
Finally, numerical examples are presented to support
the theoretical analysis of the previous sections
in Section~\ref{sec-num}.

In the sequel, we will make use of the following notations.
\begin{itemize}
\item $|x|$ is the Euclidean norm of a vector $x$ or the induced norm for a matrix.
\item We use superscript indices to denote components of a vector or a matrix.
\item Partial derivatives are denoted $\partial_{i}:=\dfrac{\partial}{\partial x^{i}}$
and $\partial_{ij}:=\dfrac{\partial^{2}}{\partial x^{i}\partial x^{j}}$ etc.
\item $C^{k}_{b}(\R^{d_{1}},\R^{d_{2}})$ is the space of $k$ times continuously differentiable functions
$g\colon\R^{d_{1}}\to\R^{d_{2}}$ with uniformly bounded derivatives (up to order $\leq k$).
\item $C_{P}^{k}(\R^{d},\R)$ denotes the space of all $k$ times continuously
differentiable functions $f\colon\R^d\to\R$ with polynomial growth,
i.\,e., there exists a constant $C>0$ and $r\in \mathbb{N}$,
such that $|\partial^{j}f(x)|\leq C(1+|x|^{2r})$
for all $x\in\R^{d}$ and any partial derivative of order $j\leq k$.
\end{itemize}

\section{Presentation of the conservative method for skew gradients problems}\label{sec-meth}
In this section, we will first present the equivalent skew gradient form of \eqref{eq1}
with a conserved quantity $I$, and then we will define our invariant-preserving numerical
method.

The equivalent skew gradient form of \eqref{eq1} is stated below.
\begin{proposition}[See Theorem~2.2 in \cite{Zhang2011} for a one-dimensional Brownian motion]
The $d$-dimensional system \eqref{eq1} with a scalar conserved quantity $I(x)$ is
equivalent to the following skew gradient (SG) form
\begin{equation}\label{SG system}
\text dX(t)=S(X)\nabla I(X)\text dt+\sum_{r=1}^{m}T_{r}(X)\nabla I(X)\circ\text dW_r(t),
\end{equation}
where $S(X)$, $T_{r}(X)\in\R^{d\times d}$ are skew symmetric matrices such
that $S(X)\nabla I(X)=f(X)$ and $T_{r}(X)\nabla I(X)=g_{r}(X)$ for $r=1,\cdots,m$.
\end{proposition}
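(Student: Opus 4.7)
The plan is to produce an explicit construction of the skew-symmetric matrices $S$ and $T_{r}$. The first step is to extract the orthogonality conditions forced by the conservation law: applying the Stratonovich chain rule to $I(X(t))$ along solutions of \eqref{eq1} yields
\begin{equation*}
0 = \mathrm{d} I(X) = \nabla I(X)^{T} f(X)\,\mathrm{d}t + \sum_{r=1}^{m}\nabla I(X)^{T} g_{r}(X)\circ\mathrm{d}W_{r}(t).
\end{equation*}
Since the Brownian motions $W_{1},\dots,W_{m}$ and the time variable $t$ are independent, the drift and each diffusion coefficient must vanish separately, giving the pointwise orthogonality relations $\nabla I(x)^{T} f(x) = 0$ and $\nabla I(x)^{T} g_{r}(x) = 0$ for $r=1,\dots,m$.

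The second step is an elementary linear-algebra identity: for any two vectors $u,v \in \R^{d}$ with $u\neq 0$ and $u^{T}v = 0$, the rank-two matrix
\begin{equation*}
A(u,v) := \frac{v u^{T} - u v^{T}}{|u|^{2}}
\end{equation*}
is skew-symmetric and satisfies $A(u,v)\,u = v$, as one checks directly using $u^{T}u = |u|^{2}$ and $u^{T}v = 0$. Setting $u = \nabla I(X)$ and $v = f(X)$ defines $S(X)$, and $v = g_{r}(X)$ defines $T_{r}(X)$. Each matrix is skew-symmetric by construction, the identities $S(X)\nabla I(X) = f(X)$ and $T_{r}(X)\nabla I(X) = g_{r}(X)$ follow immediately, and substitution into \eqref{SG system} recovers \eqref{eq1}. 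This extends the one-dimensional-noise result of Theorem~2.2 in \cite{Zhang2011} in a straightforward way, essentially by applying the same construction separately to the drift and to each diffusion vector field.

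The main obstacle is the possible degeneracy of the construction at points where $\nabla I(x)$ vanishes, since the formula divides by $|\nabla I(x)|^{2}$. I would deal with this either by restricting to the open set $\{x \in \R^{d} : \nabla I(x)\neq 0\}$, which is natural in the skew-gradient framework, by mollifying the denominator, or by observing that at such points $f(x)$ and $g_{r}(x)$ are themselves orthogonal to $\nabla I(x) = 0$ trivially, so $S$ and $T_{r}$ may be extended by any skew-symmetric value (e.g.\ zero) consistently with the required identities. Since the subsequent numerical analysis only uses the SG form \eqref{SG system}, this construction suffices wherever it is non-degenerate and no global smoothness of $S, T_{r}$ beyond that of $f, g_{r}$ and $\nabla I$ is needed.
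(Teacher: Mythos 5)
Your proof is correct and follows essentially the same route as the paper: the paper also argues constructively via the orthogonality relations $\nabla I^{T}f=0$, $\nabla I^{T}g_{r}=0$ and exhibits rank-two skew-symmetric matrices $S(x)=\bigl(f(x)a(x)^{T}-a(x)f(x)^{T}\bigr)/\bigl(a(x)^{T}\nabla I(x)\bigr)$ (and analogously for $T_{r}$ with a vector $b(x)$), of which your $A(\nabla I,v)=\bigl(v\,\nabla I^{T}-\nabla I\,v^{T}\bigr)/|\nabla I|^{2}$ is exactly the special case $a=b=\nabla I$. The degeneracy where $\nabla I(x)=0$ that you flag is present in the paper's construction as well (it requires $a(x)^{T}\nabla I(x)\neq0$), so your treatment is consistent with theirs.
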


Note that the proof of the above proposition is similar to the one
of Theorem~2.2 in \cite{Zhang2011}.
Further it makes use of constructive techniques.
It not only proves the validity of the proposition, but also presents the construction
of the skew symmetric matrices $S(X)$ and $T_{r}(X)$. For example, one can take
\[
S(x)=\dfrac{f(x)a(x)^{T}-a(x)f(x)^{T}}{a(x)^{T}\nabla I(x)},\quad T_{r}(x)=
\dfrac{g_{r}(x)b(x)^{T}-b(x)g_{r}(x)^{T}}{b(x)^{T}\nabla I(x)},
\]
where $A^T$ denotes the transpose of $A$. Here $a(x)$, $b(x)$
are arbitrary column vectors such that
$a(x)^{T}\nabla I(x)\neq 0$, $b(x)^{T}\nabla I(x)\neq 0$.

\begin{remark}
Since we will make use of general theorems (\cite[Theorem~2.1, Sect. 2.2.1]{Milstein1995} or
\cite[Theorem~14.5.2]{Kloeden1992} for instance) to prove convergence of our numerical method,
we will assume that $I$, $S$ and $T_{r}$ $(r=1,\cdots,m)$
are smooth functions with globally bounded derivatives up to certain order.
Observe however that, in certain cases, we may get rid off these restrictions
thanks to the invariant preservation property of the numerical scheme \eqref{method Cohen}
(see \cite[Remarks~3.4,~3.5 and Theorem~3.4]{Cohen2013} for instance).
\end{remark}

We now present the conservative numerical method for \eqref{eq1}
studied in this paper. Let $h>0$ be a fixed step size, and consider the numerical method defined by
\begin{align}\label{method Cohen}
\bar{X}_{n+1}&=\bar{X}_{n}+hS\bigl(\dfrac{\bar{X}_{n}+\bar{X}_{n+1}}{2}\bigr)
\int_{0}^{1}\nabla I(\bar{X}_{n}+\tau(\bar{X}_{n+1}-\bar{X}_{n}))\,\text d\tau
\nonumber \\[-1.5ex]
\\[-1.5ex]
&\quad+\sum_{r=1}^{m}\Delta \hat{W}_{r}T_{r}\bigl(\dfrac{\bar{X}_{n}+\bar{X}_{n+1}}{2}\bigr)
\int_{0}^{1}\nabla I(\bar{X}_{n}+\tau(\bar{X}_{n+1}-\bar{X}_{n}))\,\text d\tau\nonumber,
\end{align}
where $\Delta \hat{W}_{r}=\sqrt{h}\zeta_{h}^{r}$ with $\zeta_{h}^{r}$ being the
truncation of a $\mathcal{N}(0,1)$-distribution random variable $\xi^r$:
$$
\zeta_h^r=
\begin{cases}
\xi^r, &\text{if}\:\: |\xi^r|\leq A_h,\\
A_h, &\text{if}\:\: \xi^r>A_h, \\
-A_h, &\text{if}\:\: \xi^r<-A_h
\end{cases}
$$
with $A_h:=\sqrt{2k|\ln(h)|}$ for an arbitrary integer $k\geq0$. This choice is motivated by
the fact that standard Gaussian random variables $\Delta {W}_{r}$ are unbounded for arbitrary
small values of $h$, see \cite{Milstein1995} for more details.
Taking $k=2$, we have the following properties \cite{Milstein2002a}
\begin{align}\label{proptrunc}
E(\Delta\hat{W_r})^{2\ell}&\leq Kh^\ell,\quad E(\Delta\hat{W_r})^{2\ell+1}=0,\quad\text{for}\quad\ell\geq0,\nonumber\\
|E((\Delta\hat{W_r})^{2}-(\Delta{W_r})^{2})|&\leq Kh^3,
\quad E(|\Delta\hat{W_r}-\Delta{W_r}|^{2})\leq Kh^3,\\
\quad E|\Delta\hat{W_r}\Delta\hat{W_s}-\Delta{W_r}\Delta{W_s}|^{2}&\leq Kh^3,\nonumber
\end{align}
with a generic constant $K$ that does not depend on $h$. Observe,
that here and in the following the constants $K$ or $C$ may vary from line to line
but are independent on $h$ and $n$.
In fact, it is easy to prove that the integral $\int_{0}^{1}\nabla I(\bar{X}_{n}+
\tau(\bar{X}_{n+1}-\bar{X}_{n}))\,\text d\tau$ in \eqref{method Cohen} is a discrete gradient,
but in general, is not symmetric, see Definition~2.3 in \cite{Zhang2011}.

To conclude this section, we note that the above conservative method
reduces to the numerical scheme proposed in \cite{Cohen2013}
in the case of stochastic Poisson systems, i.\,e., equation \eqref{SG system} with $m=1$
and $T_{1}(x)=cS(x)$ with a real constant $c$.

\section{Properties of the conservative method}\label{sec-prop}
The conservative method \eqref{method Cohen} has been designed
to preserve the invariant $I(x)$ exactly. Indeed, one has the following immediate result.
\begin{proposition}\label{propo1}
The numerical method \eqref{method Cohen} exactly preserves
the invariant, i.\,e., $I(\bar{X}_{n})=I(\bar{X}_{n+1})$ for all $n\geq0$.
\end{proposition}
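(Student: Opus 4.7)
The plan is a direct calculation combining the fundamental theorem of calculus with the skew-symmetry of the matrices $S$ and $T_r$. First I would apply the fundamental theorem of calculus along the line segment from $\bar X_n$ to $\bar X_{n+1}$:
\begin{equation*}
I(\bar X_{n+1})-I(\bar X_n)=\int_0^1\frac{\mathrm d}{\mathrm d\tau}I\bigl(\bar X_n+\tau(\bar X_{n+1}-\bar X_n)\bigr)\,\mathrm d\tau=\bigl(\bar X_{n+1}-\bar X_n\bigr)^T\,\widetilde{\nabla I},
\end{equation*}
where I abbreviate $\widetilde{\nabla I}:=\int_0^1\nabla I(\bar X_n+\tau(\bar X_{n+1}-\bar X_n))\,\mathrm d\tau$. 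This is precisely the discrete gradient appearing in the scheme \eqref{method Cohen}.

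Next I would substitute the increment $\bar X_{n+1}-\bar X_n$ using the defining relation of the method \eqref{method Cohen}, obtaining
\begin{equation*}
I(\bar X_{n+1})-I(\bar X_n)=h\,\bigl(\widetilde{\nabla I}\bigr)^TS\!\left(\tfrac{\bar X_n+\bar X_{n+1}}{2}\right)\widetilde{\nabla I}+\sum_{r=1}^m\Delta\hat W_r\,\bigl(\widetilde{\nabla I}\bigr)^TT_r\!\left(\tfrac{\bar X_n+\bar X_{n+1}}{2}\right)\widetilde{\nabla I}.
\end{equation*}
Each term on the right-hand side is a quadratic form $v^TAv$ with $A$ skew-symmetric; by the skew-symmetry of $S$ and each $T_r$ guaranteed by the SG reformulation, all such quadratic forms vanish. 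Consequently, $I(\bar X_{n+1})-I(\bar X_n)=0$ pathwise.

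There is no real obstacle here; the argument is essentially the same conservation-by-discrete-gradient mechanism already used in \cite{Cohen2013} for the Poisson case, and it works identically for every $\omega\in\Omega$ because $\Delta\hat W_r$ only appears as a scalar multiplier of a vanishing quadratic form. The only point worth emphasising in the write-up is that the implicit equation \eqref{method Cohen} is assumed to admit a solution $\bar X_{n+1}$ (so that the computation above makes sense), and that the argument uses nothing about the distribution of $\Delta\hat W_r$ beyond its being a real number.
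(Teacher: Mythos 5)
Your argument is correct and is precisely the one the paper intends: the paper's proof simply refers to Proposition~3.1 of \cite{Cohen2013} and invokes ``the definition of \eqref{method Cohen} and the skew symmetry of the matrices $S$ and $T_r$'', which is exactly your fundamental-theorem-of-calculus plus vanishing-skew-quadratic-form computation. Your write-up just makes the details explicit.
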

\begin{proof}
This is similar to the proof of Proposition~3.1 in \cite{Cohen2013}:
the proof follows from the definition of \eqref{method Cohen} and
the skew symmetry of the matrices $S$ and $T_r$.
\end{proof}

If $I(x)$ is of a special form, further interesting properties are enjoyed
by the conservative numerical method \eqref{method Cohen}.
\begin{proposition}
If $I(x)=\dfrac12 x^{T}Cx+d^{T}x$ with $C$ being a symmetric matrix and
$d$ being a constant vector, then method \eqref{method Cohen} reduces
to the stochastic midpoint scheme \cite{Milstein2002a}. Further, it is known that
the stochastic midpoint method preserves all quadratic invariants \cite{Abdulle2012}.
\end{proposition}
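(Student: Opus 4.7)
The plan is to exploit the affine structure of $\nabla I$ when $I$ is quadratic-plus-linear. With $I(x)=\tfrac12 x^{T}Cx+d^{T}x$ and $C$ symmetric, I would first compute $\nabla I(x)=Cx+d$. Then, because this gradient is affine in $x$, the integral over $\tau\in[0,1]$ in \eqref{method Cohen} collapses: for any two vectors $y,z$,
\begin{equation*}
\int_{0}^{1}\nabla I(y+\tau(z-y))\,\text d\tau
= C\cdot\tfrac{y+z}{2}+d
= \nabla I\Bigl(\tfrac{y+z}{2}\Bigr).
\end{equation*}
Applying this with $y=\bar{X}_{n}$ and $z=\bar{X}_{n+1}$, the discrete gradient appearing in \eqref{method Cohen} is exactly $\nabla I(\bar{X}_{n+1/2})$ where $\bar{X}_{n+1/2}:=(\bar{X}_{n}+\bar{X}_{n+1})/2$.

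Next I would substitute this into \eqref{method Cohen} and use the defining identities $S(X)\nabla I(X)=f(X)$ and $T_{r}(X)\nabla I(X)=g_{r}(X)$ from the skew gradient form \eqref{SG system}. This immediately yields
\begin{equation*}
\bar{X}_{n+1}=\bar{X}_{n}+hf(\bar{X}_{n+1/2})+\sum_{r=1}^{m}\Delta\hat{W}_{r}\,g_{r}(\bar{X}_{n+1/2}),
\end{equation*}
which is precisely the stochastic midpoint scheme as used in \cite{Milstein2002a}. This establishes the first claim.

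For the second claim I would simply refer to the classical result in \cite{Abdulle2012} that the midpoint rule preserves all quadratic invariants; alternatively, a one-line verification is available by writing
\begin{equation*}
I(\bar{X}_{n+1})-I(\bar{X}_{n})=\nabla I(\bar{X}_{n+1/2})^{T}(\bar{X}_{n+1}-\bar{X}_{n})
\end{equation*}
(which holds whenever $I$ is quadratic, by symmetry of $C$) and then using skew-symmetry of $S$ and $T_{r}$ to see that the right-hand side vanishes. There is no real obstacle here: the whole statement is essentially a structural observation, and the only thing to verify carefully is the affine-gradient identity that reduces the discrete gradient to a pointwise evaluation at the midpoint.
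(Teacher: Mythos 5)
Your proposal is correct and follows essentially the same route as the paper: compute $\nabla I(x)=Cx+d$, observe that the averaged gradient collapses to $\nabla I$ evaluated at the midpoint, and then substitute using $S(x)\nabla I(x)=f(x)$ and $T_{r}(x)\nabla I(x)=g_{r}(x)$ to recover the stochastic midpoint scheme. The extra one-line verification of quadratic-invariant preservation via skew-symmetry is a harmless addition beyond the paper's citation of \cite{Abdulle2012}.
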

\begin{proof}
In the case where $I(x)=\dfrac12 x^{T}Cx+d^{T}x$ we have
\begin{align*}
  \int_{0}^{1}\nabla I(\bar{X}_{n}+\tau(\bar{X}_{n+1}-\bar{X}_{n}))\,\text d\tau&=\int_{0}^{1}
\Big(C(\bar{X}_{n}+\tau(\bar{X}_{n+1}-\bar{X}_{n}))+d\Big)\,\text d\tau\\
&=C\dfrac{\bar{X}_{n}+\bar{X}_{n+1}}{2}+d\\
&=\nabla I(\frac{\bar{X}_{n}+\bar{X}_{n+1}}{2}).
\end{align*}
Substituting this into the method \eqref{method Cohen} and recalling
that $S(x)\nabla I(x)=f(x)$ and $T_{r}(x)\nabla I(x)=g_{r}(x)$, $r=1,\cdots, m$,
we observe that the proposed method \eqref{method Cohen}
reduces to the stochastic midpoint scheme from \cite{Milstein2002a}.
\end{proof}

We next show the following result:
\begin{proposition}
If $I(x)$ is separable, i.\,e. $I(x)=I_{1}(x^{1})+I_{2}(x^{2})+\cdots+I_{d}(x^{d})$ with
$x=(x^1,\ldots,x^d)$, then the conservative method \eqref{method Cohen}
coincides with the symmetric discrete gradient method proposed in \cite{Zhang2011}
(for a one-dimensional Brownian motion).
\end{proposition}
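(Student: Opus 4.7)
The plan is to exploit the separability of $I$ to evaluate the line integral $\int_{0}^{1}\nabla I(\bar{X}_{n}+\tau(\bar{X}_{n+1}-\bar{X}_{n}))\,\text d\tau$ in closed form coordinate by coordinate, and then check that the resulting expression is precisely the symmetric discrete gradient used in \cite{Zhang2011}.

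First, I would observe that when $I(x)=\sum_{i=1}^{d}I_{i}(x^{i})$, the gradient $\nabla I(x)$ has the purely coordinate-wise form $(I_{1}'(x^{1}),\ldots,I_{d}'(x^{d}))^{T}$, so the $i$-th component of the integral depends only on $\bar{X}_{n}^{i}$ and $\bar{X}_{n+1}^{i}$. A one-dimensional change of variables $u=\bar{X}_{n}^{i}+\tau(\bar{X}_{n+1}^{i}-\bar{X}_{n}^{i})$ combined with the fundamental theorem of calculus then yields the divided difference
\[
\int_{0}^{1}I_{i}'\bigl(\bar{X}_{n}^{i}+\tau(\bar{X}_{n+1}^{i}-\bar{X}_{n}^{i})\bigr)\,\text d\tau
=\frac{I_{i}(\bar{X}_{n+1}^{i})-I_{i}(\bar{X}_{n}^{i})}{\bar{X}_{n+1}^{i}-\bar{X}_{n}^{i}}
\]
whenever $\bar{X}_{n+1}^{i}\neq\bar{X}_{n}^{i}$, extended by continuity to $I_{i}'(\bar{X}_{n}^{i})$ on the diagonal.

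Second, I would verify that this vector of divided differences matches the symmetric discrete gradient in Definition~2.3 of \cite{Zhang2011}. The consistency property $\overline{\nabla} I(x,x)=\nabla I(x)$ is built into the continuity extension, and symmetry in $(\bar{X}_{n},\bar{X}_{n+1})$ is immediate because both numerator and denominator change sign simultaneously upon swapping the two arguments. Hence the integral in \eqref{method Cohen} becomes literally the symmetric discrete gradient of \cite{Zhang2011} in the separable case.

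Finally, since both \eqref{method Cohen} (restricted to $m=1$) and the scheme in \cite{Zhang2011} evaluate the skew-symmetric matrices $S$ and $T_{1}$ at the midpoint $(\bar{X}_{n}+\bar{X}_{n+1})/2$ and multiply by $h$ and $\Delta\hat{W}_{1}$ respectively, substituting the closed-form expression above into \eqref{method Cohen} and matching term by term completes the identification. There is no real obstacle in this argument; the only subtlety is the book-keeping of the $0/0$ case on the diagonal and a careful reading of the definition in \cite{Zhang2011} to make sure the conventions for the discrete gradient agree.
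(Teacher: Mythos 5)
Your proposal is correct and follows essentially the same route as the paper: both evaluate the line integral coordinate by coordinate via the fundamental theorem of calculus to obtain the divided differences $\bigl(I_{k}(\bar{X}_{n+1}^{k})-I_{k}(\bar{X}_{n}^{k})\bigr)/\bigl(\bar{X}_{n+1}^{k}-\bar{X}_{n}^{k}\bigr)$ and identify the resulting vector with the symmetric discrete gradient of \cite{Zhang2011} before substituting back into \eqref{method Cohen}. Your additional care with the $0/0$ diagonal case is a minor refinement the paper does not spell out, but it does not change the argument.
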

\begin{proof}
Since $I(x)$ is separable, we have $I_{1},\cdots,I_{d}$ such that
\[I(x)=I_{1}(x^{1})+I_{2}(x^{2})+\cdots+I_{d}(x^{d}).\]
It then follows that the $k$th component of $\int_{0}^{1}\nabla I(\bar{X}_{n}+\tau(\bar{X}_{n+1}-\bar{X}_{n}))\,\text d\tau$ reads
\begin{align*}
&\Bigl(\int_{0}^{1}\nabla I(\bar{X}_{n}+\tau(\bar{X}_{n+1}-\bar{X}_{n}))\,\text d\tau\Bigr)^k\\
&=\int_{0}^{1}\nabla I_{k}(\bar{X}_{n}^{k}+\tau(\bar{X}_{n+1}^{k}-\bar{X}_{n}^{k}))\,\text d\tau\\
&=\int_{0}^{1}\dfrac{1}{\bar{X}_{n+1}^{k}-\bar{X}_{n}^{k}}
\dfrac{\text d}{\text d\tau}I_{k}(\bar{X}_{n}^{k}+\tau(\bar{X}_{n+1}^{k}-\bar{X}_{n}^{k}))\,\text d\tau\\
&=\dfrac{I_{k}(\bar{X}_{n+1}^{k})-I_{k}(\bar{X}_{n}^{k})}{\bar{X}_{n+1}^{k}-\bar{X}_{n}^{k}}\\
&=\Big(\bar{\nabla}I(\bar{X}_{n},\bar{X}_{n+1})\Big)^{k},
\end{align*}
where $\bar{\nabla}I(\bar{X}_{n},\bar{X}_{n+1})$
is the symmetric discrete gradient defined in \cite{Zhang2011}.
Inserting this expression in the definition of
the conservative method \eqref{method Cohen}, one notice that the proposed method
reduces to the discrete gradient method from \cite{Zhang2011} in case of a separable
conserved quantity $I$.
\end{proof}

\subsection{Mean-square order}
For stochastic Poisson systems, i.\,e., equation \eqref{SG system} with $m=1$,
and $T_{1}(x)=cS(x)$ with a real constant $c$, the authors of \cite{Cohen2013} show
that the mean-square convergence order of the numerical scheme \eqref{method Cohen} is $1$.
For general stochastic differential equations with a conserved quantity as studied in the present work,
we now show that the mean-square convergence order of the conservative method remains $1$ under
the condition of commutative noise. We recall this condition for equation \eqref{eq1}
\[\Lambda_{i}g_{r}(x)=\Lambda_{r}g_{i}(x),\quad \text{for }\quad i,r=1,\cdots, m,\]
with the operator $\Lambda_{i}:=(g_{i},\;\frac{\partial}{\partial x})=\sum_{j=1}^{d}g_{i}^{j}\frac{\partial}{\partial x^{j}}$.


\begin{theorem}\label{theo cohen}
Consider problem \eqref{eq1} with a scalar invariant $I$ discretised
by the conservative numerical method \eqref{method Cohen} with step size $h$.
Assume that the matrix-functions $S,T_{r}\in C_{b}^{2}(\R^{d},\R^{d\times d})$,
that $\nabla I$ satisfies a global Lipschitz condition and has
uniformly bounded first and second derivatives.
Assume further that the noises satisfy the commutative conditions.
Then there exist a constant $K>0$ (independent of $n$ and $h$)
such that the following error estimate holds , for $n=0,1,\cdots, N$ with $N=[T/h]$,
\[
(E|X(t_{n})-\bar{X}_{n}|^{2})^{\frac12}\leq Kh,\quad
\text{for all $h$ sufficiently small.}
\]
Here, we recall that $X(t)$ denotes the exact solution of \eqref{eq1}
and $\bar{X}_{n}$ the numerical one on the time interval $[0,T]$.
I.\,e., the numerical method \eqref{method Cohen} is of first order in the mean-square
convergence sense.
\end{theorem}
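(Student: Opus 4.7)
The plan is to invoke the fundamental mean-square convergence theorem of Milstein \cite[Theorem~2.1, Sect.~2.2.1]{Milstein1995}: the proof reduces to establishing the one-step local error estimates
\begin{align*}
|E(X(t_{n+1})-\bar X_{n+1}\mid X(t_n)=\bar X_n=x)|&\leq K(1+|x|^{2})^{1/2}h^{2},\\
\bigl(E[|X(t_{n+1})-\bar X_{n+1}|^{2}\mid X(t_n)=\bar X_n=x]\bigr)^{1/2}&\leq K(1+|x|^{2})^{1/2}h^{3/2},
\end{align*}
with exponents $(p_1,p_2)=(2,3/2)$ that yield global mean-square order $p_2-1/2=1$.

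As a preliminary step, a standard Banach fixed-point argument, using the $C_b^{2}$ bounds on $S,T_r$ together with the Lipschitz and boundedness properties of $\nabla I$, shows that for $h$ below a deterministic threshold the implicit relation \eqref{method Cohen} admits a unique solution and satisfies
\[
|\bar X_{n+1}-\bar X_n|\leq C\Big(h+\sum_{r=1}^{m}|\Delta\hat W_r|\Big);
\]
combined with \eqref{proptrunc} this gives $E|\bar X_{n+1}-\bar X_n|^{2p}=\mathcal{O}(h^{p})$ for every $p\geq 1$, which will justify the subsequent remainder estimates.

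The core of the argument is a Taylor expansion of the implicit scheme around $\bar X_n$. Expanding $S\bigl(\tfrac{\bar X_n+\bar X_{n+1}}{2}\bigr)$ and $T_r\bigl(\tfrac{\bar X_n+\bar X_{n+1}}{2}\bigr)$ to first order in $\bar X_{n+1}-\bar X_n$, expanding the line integral $\int_0^1\nabla I(\bar X_n+\tau(\bar X_{n+1}-\bar X_n))\,\mathrm d\tau$ in the same way, using the product rule $(S\nabla I)'=f'$ and $(T_r\nabla I)'=g_r'$ to collect the two first-order contributions, and finally resubstituting the leading-order expression $\bar X_{n+1}-\bar X_n=hf(\bar X_n)+\sum_r\Delta\hat W_r g_r(\bar X_n)+\text{h.o.t.}$ into the nonlinearities, one obtains
\begin{align*}
\bar X_{n+1}&=\bar X_n+hf(\bar X_n)+\sum_{r=1}^{m}\Delta\hat W_r\,g_r(\bar X_n)\\
&\quad+\tfrac12\sum_{r,i=1}^{m}\Lambda_i g_r(\bar X_n)\,\Delta\hat W_r\Delta\hat W_i+\mathcal{R}_n,
\end{align*}
where $\mathcal{R}_n$ has conditional mean $\mathcal{O}(h^{2})$ and conditional mean-square $\mathcal{O}(h^{3/2})$. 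Because $\Delta\hat W_r\Delta\hat W_i$ is symmetric in $(r,i)$, the effective coefficient in the quadratic term is the symmetrised combination $\tfrac14(\Lambda_i g_r+\Lambda_r g_i)$, which under the commutativity hypothesis $\Lambda_i g_r=\Lambda_r g_i$ collapses exactly onto the coefficient that appears in the commutative Stratonovich Milstein scheme. Comparing with the Stratonovich--Taylor expansion of $X(t_{n+1})$ (see \cite{Milstein1995,Kloeden1992}) then gives the desired local error bounds, modulo the substitution $\Delta W_r\mapsto\Delta\hat W_r$; that substitution is harmless, since \eqref{proptrunc} provides $E|\Delta\hat W_r\Delta\hat W_i-\Delta W_r\Delta W_i|^{2}=\mathcal{O}(h^{3})$ and the analogous single-increment bound, so its contribution is absorbed into $\mathcal{R}_n$. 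Milstein's theorem then closes the argument.

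I expect the main obstacle to be the bookkeeping in the Taylor expansion: one has to expand two nonlinear factors (the skew-symmetric matrix $S$ or $T_r$, and the averaged gradient of $I$), form their bilinear product, resubstitute the leading-order increment exactly once, and track which products are symmetric in the noise indices. It is precisely this enforced symmetrisation that makes the commutativity hypothesis indispensable for mean-square order~$1$; without it the antisymmetric part of the iterated Stratonovich integrals is lost, forcing the order to drop to $1/2$, in line with the classical behaviour of implicit midpoint-type schemes on non-commutative SDEs.
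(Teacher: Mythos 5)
Your proposal is correct and follows essentially the same route as the paper: both Taylor-expand the implicit scheme \eqref{method Cohen} around the current point, resubstitute the leading-order increment, use the commutativity condition to symmetrise the quadratic noise term so that it matches the Milstein expansion, control the truncated increments via \eqref{proptrunc}, and conclude from local error exponents $(2,\tfrac32)$. The only cosmetic difference is that the paper compares the scheme with the Milstein one-step map and invokes the comparison lemma of Milstein--Repin--Tretyakov, whereas you compare directly with the Stratonovich--Taylor expansion of the exact solution and invoke Milstein's fundamental theorem; the underlying computation is identical.
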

\begin{proof}
The main idea of the proof is to compare our conservative scheme
to Milstein's scheme applied to the converted It\^o SDE and use Lemma~2.1 in \cite{mrt01}
to ensure that the conservative
scheme has mean-square order of convergence one.
In order to do this, we first rewrite the one-step approximation
scheme \eqref{method Cohen} (starting at $x$) by
\begin{align}\label{one step Cohen}
\bar{X}&=x+hS(\dfrac{x+\bar{X}}{2})\int_{0}^{1}\nabla I(x+\tau(\bar{X}-x))\,\text d\tau\nonumber\\[-1.5ex]
\\[-1.5ex]
&\quad+\sum_{r=1}^{m}\Delta \hat{W}_{r}T_{r}(\dfrac{x+\bar{X}}{2})\int_{0}^{1}\nabla I(x+\tau(\bar{X}-x))\,\text d\tau.\nonumber
\end{align}
Let $\tilde{X}$ be the corresponding one-step approximation of Milstein's method (starting at $x$)
applied to \eqref{SG system} (converted to an It\^o SDE),
\begin{align}\label{one step Milstein}
\tilde{X}&=x+hS(x)\nabla I(x)+\sum_{r=1}^{m}\Delta W_{r} T_{r}(x)\nabla I(x)\nonumber \\[-1.5ex]
\\[-1.5ex]
&\quad+\sum_{i=1}^{m-1}\sum_{r=i+1}^{m}\Lambda_{i}(T_{r}(x)\nabla I(x))\Delta W_{i}\Delta W_{r}
+\dfrac12\sum_{r=1}^{m}\Lambda_{r}(T_{r}(x)\nabla I(x))(\Delta W_{r})^{2}\nonumber.
\end{align}
From \cite{Milstein1995}, we know that Milstein's method
is of mean-square order $1$ under the condition of our theorem,
in particular, if $X_{t,x}(t+h)$ denotes the exact solution of \eqref{SG system}
on $[t,t+h]$ starting at $x$, then
\begin{equation*}
|E(\tilde{X}-X_{t,x}(t+h))|\leq K(1+|x|^2)^{1/2}h^2,\quad
(E|\tilde{X}-X_{t,x}(t+h)|^{2})^{1/2}\leq K(1+|x|^2)^{1/2}h^{\frac32}.
\end{equation*}
Thus, in order to show that the numerical scheme \eqref{method Cohen} is
of mean-square order $1$ as well, using Lemma~2.1 in \cite{mrt01}, we will prove that
\begin{equation*}
|E(\bar{X}-\tilde{X})|=\mathcal{O}(h^{2}),\quad (E|\bar{X}-\tilde{X}|^{2})^{1/2}=\mathcal{O}(h^{\frac32}),
\end{equation*}
where, here and in the following, the constants in the $\mathcal{O}(\cdot)$ notations
may depend on the starting point $x$ for the schemes but are independent
of $h$ and $n$. For any $k=1,2,\cdots, d$,
the corresponding component equation of \eqref{one step Milstein} is
\begin{align*}
\tilde{X}^{k}&=x^{k}+\sum_{i=1}^{d}(S^{ki}\partial_{i}I)h+
\sum_{r=1}^{m}\sum_{i=1}^{d}(T_{r}^{ki}\partial_{i}I)\Delta W_{r}\\
&\quad+\dfrac12 \sum_{r=1}^{m} \sum_{i,j=1}^{d}(\partial_{j}T_{r}^{ki}
\partial_{i}I+T_{r}^{ki}\partial_{ij}I)
(\sum_{l=1}^{d}T_{r}^{jl}\partial_{l}I)(\Delta W_{r})^{2}\\
&\quad+\sum_{i=1}^{m-1}\sum_{r=i+1}^{m}
\sum_{l,j=1}^{d}(\partial_{j}T_{r}^{kl}
\partial_{l}I+T_{r}^{kl}\partial_{lj}I)
(\sum_{l=1}^{d}T_{i}^{jl}\partial_{l}I)(\Delta W_{i})(\Delta W_{r}).
\end{align*}
We next develop an expansion for the $k$th component equation
of \eqref{one step Cohen}. By assumptions, using deterministic Taylor expansions,
there exists $0<\theta<1$ (below $\theta$
maybe differ from line to line) such that
\begin{equation*}
S^{ki}(\dfrac{x+\bar{X}}{2})=S^{ki}(x)+\dfrac12 \sum_{j=1}^{d}\partial_{j}S^{ki}(x)\Delta^{j}+R_{S},
\end{equation*}
where $\Delta^{j}:=\bar{X}^{j}-x^{j}$ and the remainder term is given by
\begin{equation*}
R_{S}=\dfrac{1}{8}\sum_{m,n=1}^{d}
\partial_{mn}S^{ki}(x+\theta\dfrac{\bar{X}-x}{2})\Delta^{m}\Delta^{n}.
\end{equation*}
For the matrix-functions $T_{r}$, we have a similar expansion
\begin{equation*}
T_{r}^{ki}(\dfrac{x+\bar{X}}{2})=T_{r}^{ki}(x)+
\dfrac12 \sum_{j=1}^{d}\partial_{j}T_{r}^{ki}(x)\Delta^{j}+R_{T_{r}},
\end{equation*}
where the remainder term reads
\begin{equation*}
R_{T_{r}}=\dfrac{1}{8}
\sum_{m,n=1}^{d}\partial_{mn}T_{r}^{ki}(x+\theta\dfrac{\bar{X}-x}{2})\Delta^{m}\Delta^{n}.
\end{equation*}
Similarly, the component expansion of $\nabla I(x+\tau(\bar{X}-x))$ reads
\begin{equation*}
\partial_{i}I(x+\tau(\bar{X}-x))=\partial_{i}I(x)+
\tau\sum_{j=1}^{d}\partial_{ij}I(x)\Delta^{j}+R_{I},
\end{equation*}
with $R_{I}=\dfrac{\tau^{2}}{2}\sum_{j,k=1}^{d}
\partial_{ijk}I(x+\theta \tau(\bar{X}-x))\Delta^{j}\Delta^{k}.$

Substituting these expansions into the $k$th
component equation of \eqref{one step Cohen}, we obtain
\begin{align}\label{k_com_Num}
\bar{X}^{k}&=x^{k}+\sum_{i=1}^{d}S^{ki}\partial_{i}Ih+
\sum_{r=1}^{m}\sum_{i=1}^{d}T_r^{ki}\partial_{i}I\Delta \hat{W}_{r}\nonumber\\[-1.5ex]
\\[-1.5ex]
&\quad+\dfrac12 \sum_{r=1}^{m}\sum_{l,j=1}^{d}\Big(\partial_{j}T_{r}^{kl}\partial_{l}I+T_{r}^{kl}
\partial_{lj}I\Big)\Delta^{j}\Delta \hat{W}_{r}+R_{1},\nonumber
\end{align}
where
\begin{align*}
R_{1}&=\sum_{i=1}^{d}S^{ki}\Big(\int_{0}^{1}\partial_{i}I(x+\tau(\bar{X}-x))\,\text d\tau-\partial_{i}I(x)\Big)h\\
&\quad+\sum_{i=1}^{d}\Big(\dfrac12\sum_{j=1}^{d}\partial_{j}S^{ki}\Delta^{j}+R_{S}\Big)
\int_{0}^{1}\partial_{i}I(x+\tau(\bar{X}-x))\,\text d\tau h\\
&\quad+\dfrac12 \sum_{r=1}^{m}\sum_{i,j=1}^{d}\partial_{j}T_{r}^{ki}\Delta^{j}
\Big(\int_{0}^{1}\partial_{i}I(x+\tau(\bar{X}-x))\,\text d\tau-\partial_{i}I(x)\Big)\Delta \hat{W}_{r}\\
&\quad+\sum_{r=1}^{m}\sum_{i=1}^{d}R_{T_{r}}\int_{0}^{1}\partial_{i}I(x+\tau(\bar{X}-x))\,\text d\tau\Delta \hat{W}_{r}
+\sum_{r=1}^{m}\sum_{i=1}^{d}T_{r}^{ki}\int_{0}^{1}R_{I}\,\text d\tau\Delta \hat{W}_{r}.
\end{align*}
Since the noises are commutative, i.\,e., for $k=1,\cdots,d$ and $i,r=1,\cdots,m$,
\[\sum_{l,j=1}^{d}(\partial_{j}T_{r}^{kl}\partial_{l}I+T_{r}^{kl}\partial_{lj}I)(\sum_{l=1}^{d}T_{i}^{jl}\partial_{l}I)
=\sum_{l,j=1}^{d}(\partial_{j}T_{i}^{kl}\partial_{l}I+T_{i}^{kl}\partial_{lj}I)(\sum_{l=1}^{d}T_{r}^{jl}\partial_{l}I), \]
we have, after rearranging terms in the sums,
\begin{align*}
&\dfrac12\sum_{r=1}^{m}\sum_{l,j=1}^{d}(\partial_{j}T_{r}^{kl}\partial_{l}I+T_{r}^{kl}\partial_{lj}I)
   (\sum_{i=1}^{m}\sum_{l=1}^{d}T_{i}^{jl}\partial_{l}I{\Delta}\hat{W}_{i}){\cdot \Delta\hat{W}_r}\\
&=\dfrac12 \sum_{r=1}^{m} \sum_{i,j=1}^{d}(\partial_{j}T_{r}^{ki}\partial_{i}I+T_{r}^{ki}\partial_{ij}I)(\sum_{l=1}^{d}T_{r}^{jl}\partial_{l}I)(\Delta \hat{W}_{r})^{2}\\
&\quad+\sum_{i=1}^{m-1}\sum_{r=i+1}^{m}\sum_{l,j=1}^{d}(\partial_{j}T_{r}^{kl}\partial_{l}I+T_{r}^{kl}\partial_{lj}I)(\sum_{l=1}^{d}T_{i}^{jl}\partial_{l}I)
(\Delta \hat{W}_{i})(\Delta \hat{W}_{r}).
\end{align*}
Substituting it into \eqref{k_com_Num}, we obtain
\begin{align}\label{k_cons_rest}
\bar{X}^{k}&=x^{k}+\sum_{i=1}^{d}S^{ki}\partial_{i}Ih+
\sum_{r=1}^{m}\sum_{i=1}^{d}T_r^{ki}\partial_{i}I\Delta \hat{W}_{r}\nonumber\\
&\quad+\dfrac12 \sum_{r=1}^{m} \sum_{i,j=1}^{d}(\partial_{j}T_{r}^{ki}\partial_{i}I+T_{r}^{ki}\partial_{ij}I)
(\sum_{l=1}^{d}T_{r}^{jl}\partial_{l}I)(\Delta \hat{W}_{r})^{2}\nonumber\\[-1.5ex]
\\[-1.5ex]
&\quad+\sum_{i=1}^{m-1}\sum_{r=i+1}^{m}\sum_{l,j=1}^{d}(\partial_{j}T_{r}^{kl}
\partial_{l}I+T_{r}^{kl}\partial_{lj}I)(\sum_{l=1}^{d}T_{i}^{jl}\partial_{l}I)
(\Delta \hat{W}_{i})(\Delta \hat{W}_{r})\nonumber\\
&\quad+R_{1}+R_{2},\nonumber
\end{align}
where
\begin{align*}
R_{2}=\dfrac12\sum_{r=1}^{m}\sum_{l,j=1}^{d}(\partial_{j}T_{r}^{kl}\partial_{l}I+T_{r}^{kl}\partial_{lj}I)
\Big(\Delta^{j}-\sum_{i=1}^{m}\sum_{l=1}^{d}T_{i}^{jl}\partial_{l}I\Delta \hat{W}_{i}\Big)\Delta \hat{W}_{r}.
\end{align*}
Under the assumptions that $S,T_{r}\in C_{b}^{2}(\R^{d},\R^{d\times d})$,
the ones on the invariant $I$,
and due to the properties of $\Delta \hat{W}_{r}$, see \eqref{proptrunc},
we derive the following estimation from equation \eqref{one step Cohen}
\begin{align}\label{est-delta}
(E(\Delta^{i})^{2\ell})^{\frac{1}{2\ell}}\leq (E|\Delta|^{2\ell})^{\frac{1}{2\ell}}\leq Kh^{\frac12},\quad \ell\geq 1,
\end{align}
where $\Delta=(\Delta^i)_{i=1}^d$. Further, we know that $(E|R_{1}|^{2})^{\frac12}=\mathcal{O}(h^{\frac32})$.
These estimations and equation \eqref{k_com_Num} give us $|E(\Delta^{i})|=\mathcal{O}(h)$.
The estimation $|E(R_{1})|=\mathcal{O}(h^{2})$ follows from substituting $\Delta^{j}$
into the last three terms of $R_{1}$ and from the properties of $\Delta \hat{W}_{r}$ in \eqref{proptrunc}.
Similarly we get $(E|R_{2}|^{2})^{\frac12}=\mathcal{O}(h^{\frac32})$ and $|E(R_{2})|=\mathcal{O}(h^{2})$.
We now compare our conservative scheme, see also \eqref{k_cons_rest}, and Milstein's method
\begin{align*}
\rho^{k}&:=\bar{X}^{k}-\tilde{X}^{k}\\
&=\dfrac12 \sum_{r=1}^{m} \sum_{i,j=1}^{d}(\partial_{j}T_{r}^{ki}\partial_{i}I+T_{r}^{ki}\partial_{ij}I)(\sum_{l=1}^{d}T_{r}^{jl}\partial_{l}I)
((\Delta \hat{W}_{r})^{2}-(\Delta W_{r})^{2})\\
&+\sum_{i<r}\sum_{l,j=1}^{d}(\partial_{j}T_{r}^{kl}\partial_{l}I+T_{r}^{kl}\partial_{lj}I)
(\sum_{l=1}^{d}T_{i}^{jl}\partial_{l}I)((\Delta \hat{W}_{i})(\Delta \hat{W}_{r})-(\Delta W_{i})(\Delta W_{r}))\\
&+\sum_{r=1}^{m}\sum_{i=1}^{d}T_{r}^{ki}\partial_{i}I(\Delta \hat{W}_{r}-\Delta W_{r})+R_{1}+R_{2}.
\end{align*}
And obtain the estimations
\begin{equation*}
|E(\rho)|=\mathcal{O}(h^{2}),\quad E|\rho|^{2}=\mathcal{O}(h^{3}),
\end{equation*}
with the vector $\rho=(\rho^k)_{k=1}^d$. Lemma~2.1 in \cite{mrt01} thus implies that the conservative scheme
\eqref{method Cohen} is of mean-square order $1$ and thus completes the proof.
\end{proof}

\begin{remark}
In the above proof, we need commutative noise. Without this condition,
the mean-square convergence order of the conservative method \eqref{method Cohen}
is only $\frac12$. However, as we will see next, the commutativity condition is no more needed
to get weak order of convergence $1$. It is meaningful to construct
high weak order method, see \cite{Abdulle2012,Milstein1995,Kloeden1992} for instance.
\end{remark}

%
%
%
%
\subsection{Weak order}
We will now show that the conservative numerical method \eqref{method Cohen} has weak convergence order $1$.
Before that, we point out that, for sufficiently large $\ell$, $E|\bar{X}_{n}|^{2\ell}$ exist
and are uniformly bounded for all $n=0,1,\cdots,N$ according to the proof of Theorem~\ref{theo cohen}
and Lemma~2.2 in \cite[Sect. 2.2.1]{Milstein1995}.
\begin{theorem}\label{theo_weak_1}
Assume that the functions $S, T_{r}\in C_{b}^{4}(\R^{d},\R^{d\times d})$ and
$\nabla I$ satisfies a global Lipschitz condition and has uniformly bounded derivatives
from first to forth order.
 Let further $\psi\in C_{P}^{4}(\R^{d},\R)$.
Then the following inequality holds
\begin{equation*}
|E\psi(X(t_{n}))-E\psi(\bar{X}_{n})|\leq Kh,
\end{equation*}
for all $n=0,1,\cdots, N$  with a positive constant $K$ independent of $n$ and $h$ (small enough).
I.\,e., the conservative method \eqref{method Cohen} has order
of accuracy $1$ in the sense of weak approximations.
\end{theorem}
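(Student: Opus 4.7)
The plan is to apply Theorem~2.1 of Sect.~2.2.1 in \cite{Milstein1995} (or equivalently Theorem~14.5.2 of \cite{Kloeden1992}), which reduces a global weak order $p$ to estimates on local moments of the one-step approximation. For $p=1$ one has to verify, for the one-step map starting at $x$,
\begin{equation*}
\Bigl|E\prod_{j=1}^{s}(X_{t,x}(t+h)^{i_{j}}-x^{i_{j}})-E\prod_{j=1}^{s}(\bar{X}_{t,x}(t+h)^{i_{j}}-x^{i_{j}})\Bigr|\leq K(1+|x|^{2q})h^{2}
\end{equation*}
for $s=1,2,3$, together with the fourth-moment bound $E\prod_{j=1}^{4}|\bar{X}_{t,x}(t+h)^{i_{j}}-x^{i_{j}}|\leq K(1+|x|^{2q})h^{2}$. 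The uniform moment bounds on $\bar{X}_{n}$ recalled just before the statement, together with $\psi\in C_{P}^{4}$, then upgrade this local estimate to the desired global $O(h)$ weak error.

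I would reuse the Taylor expansion \eqref{k_com_Num} of $\bar{X}^{k}-x^{k}$ in powers of $h$ and $\Delta\hat{W}_{r}$ together with the moment bounds \eqref{est-delta}, both already established in the proof of Theorem~\ref{theo cohen}. For the exact flow I would use the It\^o--Taylor (Wagner--Platen) expansion of $X_{t,x}(t+h)$ after converting \eqref{SG system} to It\^o form. By the truncated-Gaussian identities \eqref{proptrunc}, replacing $\Delta W_{r}$ by $\Delta\hat{W}_{r}$ introduces only expectation corrections of order $O(h^{3})$, negligible at the required precision.

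The moment matching is then carried out term by term. For $s=1$, substituting the leading stochastic part of $\Delta^{j}$ into the third sum of \eqref{k_com_Num} and exploiting $E(\Delta\hat{W}_{r})^{2}=h+O(h^{3})$ together with $E(\Delta\hat{W}_{i}\Delta\hat{W}_{r})=0$ for $i\neq r$, one recovers the It\^o drift correction $\tfrac{h}{2}\sum_{r}\Lambda_{r}g_{r}^{k}$, so that $E(\bar{X}^{k}-x^{k})$ agrees with $E(X^{k}(t+h)-x^{k})$ up to $O(h^{2})$. For $s=2$, only the diagonal noise contributions $\sum_{r}g_{r}^{k_{1}}g_{r}^{k_{2}}\cdot E(\Delta\hat{W}_{r})^{2}$ survive the expectation and reproduce the It\^o isometry term of the exact flow. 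For $s=3$ both expectations are themselves $O(h^{2})$ (odd joint moments of Gaussians vanish), so their difference trivially satisfies the bound; the fourth-moment estimate follows from \eqref{est-delta} via Cauchy--Schwarz.

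The main obstacle, and at the same time the reason the commutativity assumption used in Theorem~\ref{theo cohen} is no longer required here, is the treatment of the cross-noise iterated-integral terms with indices $i\neq r$. In the mean-square analysis these coefficients had to match \emph{pathwise}, forcing $\Lambda_{i}(T_{r}\nabla I)=\Lambda_{r}(T_{i}\nabla I)$. In the weak framework only expectations of products of Brownian increments enter, and since $E(\Delta W_{i}\Delta W_{r})=0=E(\Delta\hat{W}_{i}\Delta\hat{W}_{r})$ for $i\neq r$, any pathwise mismatch between our scheme and the It\^o--Taylor expansion of the exact flow integrates out. This is precisely what allows the weak order of \eqref{method Cohen} to remain $1$ for arbitrary, possibly non-commutative, noise vector fields.
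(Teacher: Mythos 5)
Your proposal is correct and follows essentially the same route as the paper: invoke the general weak-convergence theorem of Milstein--Tretyakov / Kloeden--Platen, verify the local moment-matching conditions for $s=1,2,3$ by comparing the Taylor expansion \eqref{k_com_Num} of the scheme (with the truncated-increment corrections \eqref{proptrunc}) against the It\^o--Taylor expansion of the exact flow, and obtain the higher-moment bound from \eqref{est-delta} via Cauchy--Schwarz. Your closing observation --- that the off-diagonal iterated-integral terms need not match pathwise because $E(\Delta\hat{W}_{i}\Delta\hat{W}_{r})=0$ for $i\neq r$ --- is in fact a slightly more careful justification of why no commutativity is needed than the paper gives, since the paper reuses the expansion \eqref{k_cons_rest} that was derived under the commutativity assumption.
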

\begin{proof}
To show that the weak order of accuracy of our numerical method is $p=1$,
we will use the main theorem on convergence of weak approximations
\cite[Theorem~2.1, Sect. 2.2.1]{Milstein1995}, see also \cite[Theorem~14.5.2]{Kloeden1992},
and prove the following estimates
\begin{equation}\label{est1}
\Big|E\Big(\prod_{j=1}^{s}\Delta^{i_{j}}-\prod_{j=1}^{s}\bar{\Delta}^{i_{j}}\Big)\Big|
\leq K(x)h^{p+1},\quad s=1,\cdots,2p+1,
\end{equation}
and
\begin{equation}\label{est2}
E\prod_{j=1}^{2(2p+2)}|\bar{\Delta}^{i_{j}}|\leq K(x)h^{2p+2},
\end{equation}
where $K(x)$ is some function with polynomial growth
and we use the notations $\Delta^i:=X^i-x^i$ and $\bar{\Delta}^{i}:=\bar{X}^i-x^i$
with $X^i$ being the $i$th component of the exact solution of equation \eqref{eq1}
starting from $x$, and $\bar{X}$ being its numerical approximation
(given by \eqref{method Cohen} in our case). From the proof of Theorem~\ref{theo cohen}
and the use of Cauchy-Schwarz inequality,
one easily obtains estimation \eqref{est2}.
Below we will show that \eqref{est1} holds for $p=1$.

The $k$th component of $X(t)$ satisfies the It\^o SDE
\begin{align*}
\text dX^{k}&=\sum_{i=1}^{d}S^{ki}\partial_{i}I\,\text dt+\dfrac{1}{2}
\sum_{r=1}^{m}\sum_{i,j=1}^{d}(\partial_{j}T_{r}^{ki}\partial_{i}I+T_{r}^{ki}\partial_{ij}I)
(\sum_{l=1}^{d}T_{r}^{jl}\partial_{l}I)\,\text dt\nonumber\\
&\quad+\sum_{r=1}^{m}\sum_{i=1}^{d}T_{r}^{ki}\partial_{i}I\,\text dW_{r}(t).
\end{align*}
To simplify the notations, we let
\[a^{k}=\sum_{i=1}^{d}S^{ki}\partial_{i}I+\dfrac{1}{2}\sum_{r=1}^{m}\sum_{i,j=1}^{d}(\partial_{j}T_{r}^{ki}\partial_{i}I+T_{r}^{ki}\partial_{ij}I)(\sum_{l=1}^{d}T_{r}^{jl}\partial_{l}I)\]
and $g_{r}^{k}=\sum_{i=1}^{d}T_{r}^{ki}\partial_{i}I$. Then
\begin{equation}\label{k_com_X}
X_{t,x}^{k}(t+h)=x^{k}+\int_{t}^{t+h}a^{k}(X(s))\,\text ds+
\sum_{r=1}^{m}\int_{t}^{t+h}g_{r}^{k}(X(s))\,\text dW_{r}(s).
\end{equation}
We now prove \eqref{est1} for $s=1$. From the proof of Theorem~\ref{theo cohen}, we have the expansion \eqref{k_cons_rest} of the conservative method $\bar{X}^{k}$. Compare it with equation \eqref{k_com_X}, we have

\begin{align*}
|E(\Delta^{k}-\bar{\Delta}^{k})|=\Big|E\int_{t}^{t+h}a^{k}(X(s))\,\text ds-a^k(x)h-E(R_{1}+R_{2})-E(R_{3})\Big|,
\end{align*}
where \[E(R_{3})=\dfrac{1}{2}\sum_{r=1}^{m}\sum_{i,j=1}^{d}
(\partial_{j}T_{r}^{ki}\partial_{i}I+T_{r}^{ki}\partial_{ij}I)
(\sum_{l=1}^{d}T_{r}^{jl}\partial_{l}I)E[(\Delta \hat{W}_{r})^{2}-(\Delta W_{r})^{2}].\]
We know that (recalling that we use truncated random variables, see Section~\ref{sec-meth})
\[|E(R_{1}+R_{2})|\leq Kh^{2},\quad |E(R_{3})|\leq Kh^{3}.\]
Hence
\begin{align*}
|E(\Delta^{k}-\bar{\Delta}^{k})|&\leq \Big|E\int_{t}^{t+h}a^{k}(X(s))\,\text ds-a^k(x)h\Big|+Kh^{2}\\
&\leq Kh^{2}+\int_{t}^{t+h}\sum_{n_{1}=1}^{d}\dfrac{\partial a^{k}(x)}{\partial x_{n_{1}}}|E\Delta^{n_{1}}(s)|\,\text ds\\
&\quad+\int_{t}^{t+h}\sum_{n_{1},n_{2}=1}^{d}\Big|E\dfrac{\partial^{2}a^{k}(x^{\theta})}{\partial x_{n_{1}}\partial x_{n_{2}}}\Delta^{n_{1}}(s)\Delta^{n_{2}}(s)\Big|\,\text ds\\
&\leq Kh^{2}.
\end{align*}
This proves equality \eqref{est1} for $s=1$.
We next show that \eqref{est1} holds for $s=2$.
By definition of $\Delta^{j}$ and a use of It\^o's isometry, we have
\begin{align*}
E(\Delta^{i_{1}}\Delta^{i_{2}})&=E\Big\{\Big(\int_{t}^{t+h}a^{i_{1}}(X(s))\,\text ds+
\sum_{r=1}^{m}\int_{t}^{t+h}g_{r}^{i_{1}}(X(s))\,\text dW_{r}(s)\Big)\nonumber\\
&\Big(\int_{t}^{t+h}a^{i_{2}}(X(s))\,\text ds+\sum_{r=1}^{m}\int_{t}^{t+h}g_{r}^{i_{2}}(X(s))\,\text dW_{r}(s)\Big)\Big\}\nonumber\\
&=E\int_{t}^{t+h}a^{i_{1}}(X(s))\,\text ds\int_{t}^{t+h}a^{i_{2}}(X(s))\,\text ds\nonumber\\
&\quad+\sum_{r=1}^{m}E\int_{t}^{t+h}a^{i_{1}}(X(s))\,\text ds\int_{t}^{t+h}g_{r}^{i_{2}}(X(s))\,\text dW_{r}(s)\nonumber\\
&\quad+\sum_{r=1}^{m}E\int_{t}^{t+h}g_{r}^{i_{1}}(X(s))\,\text dW_{r}(s)\int_{t}^{t+h}a^{i_{2}}(X(s))\,\text ds\nonumber\\
&\quad+\sum_{r=1}^{m}E\int_{t}^{t+h}g_{r}^{i_{1}}(X(s))g_{r}^{i_{2}}(X(s))\,\text ds.
\end{align*}
By definition of $\bar{\Delta}^{j}$, we get
\begin{align*}
E(\bar{\Delta}^{i_{1}}\bar{\Delta}^{i_{2}})=a^{i_{1}}(x)a^{i_{2}}(x)h^{2}+\sum_{r=1}^{m}g_{r}^{i_{1}}(x)g_{r}^{i_{2}}(x)h+\hat{R}
\end{align*}
with $|E(\hat{R})|\leq Kh^{2}$.

Since
\begin{align*}
&\Big|E\int_{t}^{t+h}a^{i_{1}}(X(s))\,\text ds\int_{t}^{t+h}g_{r}^{i_{2}}(X(s))\,\text dW_{r}(s)\Big|\\
&=\Big|E\Big(\int_{t}^{t+h}(a^{i_{1}}(X(s))-a^{i_{1}}(x))\,\text ds+a^{i_{1}}(x)h\Big)\\
&\quad\Big(\int_{t}^{t+h}(g_{r}^{i_{2}}(X(s))-g_{r}^{i_{2}}(x))\,\text dW_{r}(s)+g_{r}^{i_{2}}(x)\Delta W_{r}\Big)\Big|\\
&=\Big|E\int_{t}^{t+h}(a^{i_{1}}(X(s))-a^{i_{1}}(x))\,\text ds
\int_{t}^{t+h}(g_{r}^{i_{2}}(X(s))-g_{r}^{i_{2}}(x))\,\text dW_{r}(s)\\
&\quad+g_{r}^{i_{2}}(x)E\Delta W_{r}\int_{t}^{t+h}(a^{i_{1}}(X(s))-a^{i_{1}}(x))\,\text ds\Big|\\
&\leq Kh^{2}
\end{align*}
and, by a Taylor expansions,
\[\Big|E\int_{t}^{t+h}g_{r}^{i_{1}}(X(s))g_{r}^{i_{2}}(X(s))\,\text ds-g_{r}^{i_{1}}(x)g_{r}^{i_{2}}(x)h\Big|\leq Kh^{2},\]
we obtain that
\[|E(\Delta^{i_{1}}\Delta^{i_{2}}-\bar{\Delta}^{i_{1}}\bar{\Delta}^{i_{2}})|=\mathcal{O}(h^{2}).\]
We finally prove that equality \eqref{est1} holds for $s=3$. As above, if we write down the expressions for  $E(\Delta^{i_{1}}\Delta^{i_{2}}\Delta^{i_{3}})$ and
$E(\bar\Delta^{i_{1}}\bar\Delta^{i_{2}}\bar\Delta^{i_{3}})$, we will observe
that we only have to estimate the following term:
\begin{align*}
&\Big|E\int_{t}^{t+h}g_{r_{1}}^{i_{1}}(X(s))\,\text dW_{r_{1}}
\int_{t}^{t+h}g_{r_{2}}^{i_{2}}(X(s))\,\text dW_{r_{2}}\int_{t}^{t+h}g_{r_{3}}^{i_{3}}(X(s))\,\text dW_{r_{3}}\Big|\\
&=\Big|E\Big(\int_{t}^{t+h}(g_{r_{1}}^{i_{1}}(X(s))-g_{r_{1}}^{i_{1}}(x))\,\text dW_{r_{1}}(s)+g_{r_{1}}^{i_{1}}(x)\Delta W_{r_{1}}\Big)\\
&\quad\Big(\int_{t}^{t+h}(g_{r_{2}}^{i_{2}}(X(s))-g_{r_{2}}^{i_{2}}(x))\,\text dW_{r_{2}}(s)+g_{r_{2}}^{i_{2}}(x)\Delta W_{r_{2}}\Big)\\
&\quad\Big(\int_{t}^{t+h}(g_{r_{3}}^{i_{3}}(X(s))-g_{r_{3}}^{i_{3}}(x))\,\text dW_{r_{3}}(s)+g_{r_{3}}^{i_{3}}(x)\Delta W_{r_{3}}\Big)\Big|\\
&\leq Kh^{2}.
\end{align*}
Therefore,
\[|E(\Delta^{i_{1}}\Delta^{i_{2}}\Delta^{i_{3}}-\bar{\Delta}^{i_{1}}\bar{\Delta}^{i_{2}}\bar{\Delta}^{i_{3}})|=\mathcal{O}(h^{2}).\]
Thus we complete the proof of this theorem.
\end{proof}

\section{Quadrature rule}\label{sec-qf}
In this section, we will investigate the use of a quadrature formula $(c_{i},b_{i})_{i=1}^{D}$
$$
\int_{0}^{1}f(\tau)\,\text d\tau\approx\sum_{i=1}^{D}b_{i}f(c_{i})
$$
to approximate the integral present in the conservative numerical method \eqref{method Cohen}.
In this case, we obtain the following numerical approximation
\begin{align}\label{method Cohen app}
\widehat{X}_{n+1}&=\widehat{X}_{n}+hS(\dfrac{\widehat{X}_{n}+\widehat{X}_{n+1}}{2})\sum_{i=1}^{D}b_{i}\nabla I(\widehat{X}_{n}+c_{i}(\widehat{X}_{n+1}-\widehat{X}_{n}))\nonumber\\[-1.5ex]
\\[-1.5ex]
&\quad+\sum_{r=1}^{m}\Delta\hat{W_r}T_{r}(\dfrac{\widehat{X}_{n}+\widehat{X}_{n+1}}{2})\sum_{i=1}^{D}b_{i}\nabla I(\widehat{X}_{n}+c_{i}(\widehat{X}_{n+1}-\widehat{X}_{n})).\nonumber
\end{align}
Second moments of such numerical approximations are seen to be bounded as this was done in the previous section.

We first investigate the effect of the use of a quadrature formula on the conservation of $I$.
\begin{proposition}\label{propo2}
The numerical scheme \eqref{method Cohen app} exactly preserves
polynomial conserved quantity $I(x)$ of degree $\nu \leq q$, where $q$
is the order of the quadrature formula. On the other hand,
in the case where $S,T_r\in C_{b}(\R^{d},\R^{d\times d})$ and $\nabla I\in C^{q}_{b}(\R^{d},\R^{d})$,
then one has $E(I(\widehat{X}_{n+1})-I(\widehat{X}_{n}))^{2}=\mathcal{O}(h^{q+1})$.
\end{proposition}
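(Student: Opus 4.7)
The plan is to base the argument on the fundamental theorem of calculus applied to $I$ along the segment joining $\widehat{X}_n$ and $\widehat{X}_{n+1}$. Writing $\Delta := \widehat{X}_{n+1} - \widehat{X}_n$ and $M := hS\bigl(\tfrac{\widehat{X}_n + \widehat{X}_{n+1}}{2}\bigr) + \sum_{r=1}^m \Delta\hat{W}_r\, T_r\bigl(\tfrac{\widehat{X}_n + \widehat{X}_{n+1}}{2}\bigr)$, the scheme \eqref{method Cohen app} reads $\Delta = M \sum_{i=1}^D b_i \nabla I(\widehat{X}_n + c_i \Delta)$ with $M$ skew-symmetric. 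Setting $F(\tau) := \nabla I(\widehat{X}_n + \tau\Delta)^T M \sum_{i=1}^D b_i \nabla I(\widehat{X}_n + c_i \Delta)$, the fundamental theorem of calculus yields $I(\widehat{X}_{n+1}) - I(\widehat{X}_n) = \int_0^1 F(\tau)\,\text d\tau$.

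The key observation is that applying the same quadrature to $F$ itself produces $\sum_{j=1}^D b_j F(c_j) = w^T M w = 0$, where $w := \sum_{i=1}^D b_i \nabla I(\widehat{X}_n + c_i \Delta)$, by skew-symmetry of $M$. Hence $I(\widehat{X}_{n+1}) - I(\widehat{X}_n)$ is precisely the quadrature error of the (smooth) function $F$ on $[0,1]$. For a polynomial invariant $I$ of degree $\nu$, the map $\tau \mapsto \nabla I(\widehat{X}_n + \tau\Delta)$ is polynomial of degree $\nu - 1$, so $F$ is a polynomial of degree at most $\nu - 1 \leq q-1$ in $\tau$ and is therefore integrated exactly by a rule of order $q$. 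This yields the first statement.

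For the second statement, the classical Peano-kernel bound on the interval $[0,1]$ gives $|I(\widehat{X}_{n+1}) - I(\widehat{X}_n)| \leq C \sup_{\tau \in [0,1]} |F^{(q)}(\tau)|$. Differentiating $F$ in $\tau$ via the chain rule, each derivative brings down a factor $\Delta$, so under $\nabla I \in C^q_b$ and $S, T_r \in C_b$ one obtains a bound of the form $|F^{(q)}(\tau)| \leq K(\widehat{X}_n)\, |M|\, |\Delta|^q$ with $K$ of polynomial growth. Taking norms in the defining relation of the scheme and using the Lipschitz bound on $\nabla I$ yields $|M| \leq C(h + \sum_r |\Delta\hat{W}_r|)$ and, for $h$ small enough, $|\Delta| \leq K(\widehat{X}_n)|M|$; altogether $(I(\widehat{X}_{n+1}) - I(\widehat{X}_n))^2 \leq K(\widehat{X}_n)^2 \bigl(h + \sum_r |\Delta\hat{W}_r|\bigr)^{2(q+1)}$.

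The final step is to take expectations and use the moment bounds \eqref{proptrunc}, namely $E|\Delta\hat{W}_r|^{2\ell} \leq K h^\ell$, together with the uniform-in-$n$ boundedness of polynomial moments of $\widehat{X}_n$ noted at the start of Section~\ref{sec-prop}, to conclude $E(I(\widehat{X}_{n+1}) - I(\widehat{X}_n))^2 = \mathcal{O}(h^{q+1})$. The main obstacle I foresee is the bound on $|\Delta|$: one must invoke well-posedness of the implicit scheme for $h$ sufficiently small, and careful bookkeeping is required so that the product $|M|\,|\Delta|^q$ absorbs cleanly into $(h + \sum_r |\Delta\hat{W}_r|)^{q+1}$ without a stray deterministic factor that would degrade the order.
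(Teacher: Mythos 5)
Your proof is correct and follows essentially the same route as the paper: both identify $I(\widehat{X}_{n+1})-I(\widehat{X}_{n})$ with the quadrature error of $\tau\mapsto\nabla I(\widehat{X}_{n}+\tau\Delta)$ paired against the constant vector $Mw$ (skew-symmetry killing the quadrature approximation itself), settle the polynomial case by degree counting, and bound the error in the general case by a Taylor/Peano remainder of size $\mathcal{O}(|\Delta|^{q})$ combined with the moment estimates \eqref{proptrunc}. The only cosmetic difference is that the paper bounds $E|\widehat{X}_{n+1}-\widehat{X}_{n}|^{2q}=\mathcal{O}(h^{q})$ directly, whereas you route the same estimate through $|M|\leq C\bigl(h+\sum_{r}|\Delta\hat{W}_{r}|\bigr)$ and $|\Delta|\leq C|M|$.
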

\begin{proof}
The proof of the first statement results from the definition of the order of
a quadrature formula.

On the other hand, from equation \eqref{method Cohen app}, we know that
\begin{equation}\label{4}
  E|\widehat{X}_{n+1}-\widehat{X}_{n}|^{2\ell}=\mathcal{O}(h^{\ell}).
\end{equation}
The expression for the error in the conserved quantity reads
\begin{align*}
  I(\widehat{X}_{n+1})-I(\widehat{X}_{n})
  &=\Big(\delta I\Big)^{T}
     S(\dfrac{\widehat{X}_{n}+\widehat{X}_{n+1}}{2})\Big(\sum_{i=1}^{D}b_{i}\nabla I(\sigma(c_{i} h))\Big)h\\
     &\quad+\sum_{r=1}^{m}\Big(\delta I\Big)^{T}
     T_{r}(\dfrac{\widehat{X}_{n}+\widehat{X}_{n+1}}{2})\Big(\sum_{i=1}^{D}b_{i}\nabla I(\sigma(c_{i} h))\Big)\Delta\hat{W}_{r},
\end{align*}
where we use the notations $\displaystyle\delta I=\int_{0}^{1}\nabla I(\sigma(\tau h))\,\text d\tau
-\sum_{i=1}^{D}b_{i}\nabla I(\sigma(c_{i} h))$
and $\sigma(\tau h)=\widehat{X}_{n}+\tau(\widehat{X}_{n+1}-\widehat{X}_{n})$.

Since the order of the first term is higher than the second one,
we only need to estimate the second term. Using $S,T_r\in C_{b}(\R^{d},\R^{d\times d})$ and $I\in C^{q+1}_{b}(\R^{d},\R)$,
the second statement follows from the following estimates
\begin{align*}
  &E\Big[\Big(\delta I\Big)^{T}
   T_{r}(\frac{\widehat{X}_{n}+\widehat{X}_{n+1}}{2})\Big(\sum_{i=1}^{D}b_{i}\nabla I(\sigma(c_{i} h))\Big)\Delta\hat{W}_{r}\Big]^{2}\\
   &\leq KhE|\delta I|^{2}
   \leq Kh\Big(E\Big(|\dfrac{\partial^{q+1}I(\theta)}{\partial x^{q+1}}|^{2}|\widehat{X}_{n+1}-\widehat{X}_{n}|^{2q}\Big)\Big)\\
   &\leq Kh\Big(E|\widehat{X}_{n+1}-\widehat{X}_{n}|^{2q}\Big)
   \leq Kh^{q+1}.
\end{align*}
Here, $\theta$ denotes a real number appearing in the expression of the remainder
of the Taylor expansions up to order $q$ of $\nabla I$ and the last inequality
follows from the estimations \eqref{4}.
\end{proof}

To investigate the effect of the use of a quadrature formula on the convergence orders of the scheme,
we start with the case where $S$ and $T_r$ are constant skew symmetric matrices.
Then the numerical approximation \eqref{method Cohen app} reads
\begin{align}\label{eq2}
\widehat{X}_{n+1}&=\widehat{X}_{n}+h\sum_{i=1}^{D}S\nabla I(\widehat{X}_{n}+c_{i}(\widehat{X}_{n+1}-\widehat{X}_{n}))b_{i}\nonumber\\[-1.5ex]
\\[-1.5ex]
&\quad +\sum_{r=1}^{m}\Delta\hat{W}_{r}\sum_{i=1}^{D}T_{r}\nabla I(\widehat{X}_{n}+c_{i}(\widehat{X}_{n+1}-\widehat{X}_{n}))b_{i}.\nonumber
\end{align}
Denote $Y_{i}=\widehat{X}_{n}+c_{i}(\widehat{X}_{n+1}-\widehat{X}_{n})$, then we have
\begin{align*}
\widehat{X}_{n+1}&=\widehat{X}_{n}+h\sum_{i=1}^{D}S\nabla I(Y_{i})b_{i}+\sum_{r=1}^{m}\Delta\hat{W}_{r}\sum_{i=1}^{D}T_{r}\nabla I(Y_{i})b_{i}\\
&=\widehat{X}_{n}+h\sum_{i=1}^{D}f(Y_{i})b_{i}+\sum_{r=1}^{m}\Delta\hat{W}_{r}\sum_{i=1}^{D}g_{r}(Y_{i})b_{i}
\end{align*}
and
\begin{align*}
Y_{i}&=\widehat{X}_{n}+c_{i}\Big[h\sum_{j=1}^{D}f(Y_{j})b_{j}+\sum_{r=1}^{m}\Delta\hat{W}_{r}\sum_{j=1}^{D}g_{r}(Y_{j})b_{j}\Big]\\
&=\widehat{X}_{n}+h\sum_{j=1}^{D}c_{i}b_{j}f(Y_{j})+\sum_{r=1}^{m}\Delta\hat{W}_{r}\sum_{j=1}^{D}
c_{i}b_{j}g_{r}(Y_{j}).
\end{align*}
This is nothing but an implicit $D$-stage stochastic Runge-Kutta
method with Butcher tableau 

\begin{center}
\begin{tabular}{c||c|c|c}
$c$ & $cb^T$ & $\cdots$ & $cb^T$\\\hline
  & $b^T$ & $\cdots$ & $b^T$\\
\end{tabular}\\
\quad\quad{$\underbrace{\rule{0.2\columnwidth}{0pt}}_{m\:{times}}$}
\end{center}

Using now a quadrature formula $(c_{i},b_{i})_{i=1}^{D}$ of order bigger than $1$, we have
\[1=\int_{0}^{1}1\,\text d\tau=\sum_{i=1}^{D}b_{i} \quad\text{ and }\quad \dfrac12=\int_{0}^{1}\tau\,\text d\tau =\sum_{i=1}^{D}c_{i}b_{i}.\]
This implies that the mean-square order of the method \eqref{eq2} is $1$ (in the commutative case) using results from \cite{bb00}
and the weak order is also $1$ using results from \cite{Rossler2004}.

We next present the result for non-constant matrices $S(x)$ and $T_r(x)$.
\begin{theorem}\label{theo cohen2}
Let $q$ be the order of the quadrature formula $(c_{i},b_{i})_{i=1}^{D}$.
Under the condition of Theorem~\ref{theo cohen},
if  $q\geq 2$ then the scheme \eqref{method Cohen app}
is of order $1$ in the mean-square convergence sense.
\end{theorem}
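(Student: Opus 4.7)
The plan is to mirror the strategy of Theorem~\ref{theo cohen}: compare the one-step approximation $\widehat{X}$ (starting at $x$) of the quadrature scheme \eqref{method Cohen app} with Milstein's one-step approximation $\tilde{X}$ and invoke Lemma~2.1 of \cite{mrt01}. The natural decomposition is
\[
\widehat{X}-\tilde{X}=(\widehat{X}-\bar{X})+(\bar{X}-\tilde{X}),
\]
where $\bar{X}$ denotes the one-step approximation of the no-quadrature scheme \eqref{method Cohen} started at the same $x$. Since the bounds $|E(\bar{X}-\tilde{X})|=\mathcal{O}(h^{2})$ and $(E|\bar{X}-\tilde{X}|^{2})^{1/2}=\mathcal{O}(h^{3/2})$ were already established inside the proof of Theorem~\ref{theo cohen}, the task reduces to showing the same one-step orders for $\widehat{X}-\bar{X}$.

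Subtracting the two implicit equations and separating the contributions that are linear in $\widehat{X}-\bar{X}$ coming from Lipschitz estimates on $S$, $T_{r}$ and $\nabla I$ (evaluated at different arguments) from the pure quadrature discrepancy, I obtain a fixed-point relation $(I-L)(\widehat{X}-\bar{X})=R$ in which $L$ has $L^{p}$-operator norm $\mathcal{O}(h^{1/2})$ (so $I-L$ is invertible for $h$ small enough) and
\[
R=-h\,S\bigl(\tfrac{x+\bar{X}}{2}\bigr)\,\delta I-\sum_{r=1}^{m}\Delta\hat{W}_{r}\,T_{r}\bigl(\tfrac{x+\bar{X}}{2}\bigr)\,\delta I,
\]
where $\delta I=\int_{0}^{1}\nabla I(x+\tau(\bar{X}-x))\,d\tau-\sum_{i=1}^{D}b_{i}\nabla I(x+c_{i}(\bar{X}-x))$ is the genuine quadrature error. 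Taylor-expanding $\nabla I$ around $x$ and invoking the order-$q$ moment conditions $\sum_{i}b_{i}c_{i}^{k}=1/(k+1)$ for $k=0,\ldots,q-1$ gives
\[
\delta I=\sum_{k\geq q}\frac{1}{k!}\,\nabla^{k+1}I(x)(\bar{X}-x)^{k}\Bigl(\tfrac{1}{k+1}-\sum_{i}b_{i}c_{i}^{k}\Bigr),
\]
so that $|\delta I|\leq C|\bar{X}-x|^{q}$ under the smoothness assumptions on $I$. Combined with the a priori moment bound $(E|\bar{X}-x|^{2\ell})^{1/(2\ell)}\leq C\sqrt{h}$ established as in \eqref{est-delta} and with the properties \eqref{proptrunc} of $\Delta\hat{W}_{r}$, Cauchy--Schwarz gives $(E|R|^{2})^{1/2}=\mathcal{O}(h^{(q+1)/2})$, which is $\mathcal{O}(h^{3/2})$ as soon as $q\geq 2$; this delivers the required mean-square one-step bound on $\widehat{X}-\bar{X}$.

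The hard part will be the expectation bound $|E(\widehat{X}-\bar{X})|=\mathcal{O}(h^{2})$ in the borderline case $q=2$, since a crude Cauchy--Schwarz estimate on $E[\Delta\hat{W}_{r}T_{r}\delta I]$ yields only $\mathcal{O}(h^{3/2})$. To recover the missing half power I would expand $\bar{X}-x$ one further step using the scheme itself, $\bar{X}-x=\sum_{r}g_{r}(x)\Delta\hat{W}_{r}+\mathcal{O}(h)$, and substitute into the leading $\mathcal{O}(|\bar{X}-x|^{q})$ piece of $\delta I$. The resulting leading contribution to $E[\Delta\hat{W}_{r}T_{r}\delta I]$ is then a polynomial of odd total degree in the truncated Gaussian increments, whose expectation vanishes thanks to the odd-moment identity in \eqref{proptrunc}, while the remainder is of order $h^{2}$; a parallel refinement disposes of the drift term $hS\delta I$. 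Once both one-step estimates $|E(\widehat{X}-\tilde{X})|=\mathcal{O}(h^{2})$ and $(E|\widehat{X}-\tilde{X}|^{2})^{1/2}=\mathcal{O}(h^{3/2})$ are in hand, Lemma~2.1 of \cite{mrt01} applied to the pair $(\widehat{X},\tilde{X})$ yields mean-square order one for the quadrature scheme \eqref{method Cohen app}.
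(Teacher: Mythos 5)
Your argument is correct, and it organizes the proof differently from the paper. The paper does not subtract the two implicit equations: it re-derives from scratch a Taylor expansion of the one-step map of \eqref{method Cohen app} (expanding $S$, $T_{r}$ at $\frac{x+\widehat{X}}{2}$ and $\nabla I$ at the quadrature nodes, and using precisely the two moment conditions $\sum_{\theta}b_{\theta}=1$, $\sum_{\theta}b_{\theta}c_{\theta}=\frac12$ supplied by $q\geq 2$), shows that the resulting expansion has exactly the same leading terms as the expansion \eqref{k_cons_rest} of \eqref{method Cohen} obtained in the proof of Theorem~\ref{theo cohen}, and absorbs everything else into remainders $\hat{R}_{1},\hat{R}_{2}$ with $|E(\hat{R}_{1}+\hat{R}_{2})|=\mathcal{O}(h^{2})$ and $(E|\hat{R}_{1}+\hat{R}_{2}|^{2})^{1/2}=\mathcal{O}(h^{3/2})$. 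Your route instead isolates the genuine quadrature defect $\delta I$ by writing $(I-L)(\widehat{X}-\bar{X})=R$ and inverting $I-L$; this makes the mechanism more transparent (the theorem becomes a perturbation of Theorem~\ref{theo cohen} by a term controlled solely by the order of the quadrature rule), and you correctly identify the only delicate point, namely that $|E(\widehat{X}-\bar{X})|=\mathcal{O}(h^{2})$ for $q=2$ cannot come from Cauchy--Schwarz alone but requires the vanishing of odd moments of the truncated increments from \eqref{proptrunc} after substituting $\bar{X}-x=\sum_{r}g_{r}(x)\Delta\hat{W}_{r}+\mathcal{O}(h)$ into the leading term of $\delta I$ --- the same cancellation is used, implicitly, in the paper's estimate of $E\hat{R}_{1}$. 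Two small points of care: your infinite Taylor series for $\delta I$ should be a finite expansion with remainder (and for general $q$ one should simply invoke the order-$2$ error bound $|\delta I|\leq C|\bar{X}-x|^{2}$, since the stated hypotheses only guarantee two bounded derivatives of $\nabla I$); and the pathwise norm of $L$ is $\mathcal{O}(\sqrt{h|\ln h|})$ rather than $\mathcal{O}(h^{1/2})$ because of the truncation level $A_{h}$, which still gives invertibility of $I-L$ for small $h$ and the $L^{2}$ bound $\mathcal{O}(h^{1/2})$ you need. Neither affects the validity of the proof.
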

\begin{proof}
We want to compare the scheme \eqref{method Cohen app} with the conservative method \eqref{method Cohen}.
The $k$th component of the one-step numerical scheme \eqref{method Cohen app} reads
\begin{align*}
\widehat{X}^{k}&=x^{k}+h\sum_{i=1}^{d}S^{ki}(\dfrac{x+\widehat{X}}{2})\sum_{\theta=1}^{D}b_{\theta}\partial_{i} I(x+c_{\theta}(\widehat{X}-x))\nonumber\\
&\quad+\sum_{r=1}^{m}\sum_{i=1}^{d}\Delta \hat{W}_{r}T_{r}^{ki}(\dfrac{x+\widehat{X}}{2})\sum_{\theta=1}^{D}b_{\theta}\partial_{i} I(x+c_{\theta}(\widehat{X}-x)).
\end{align*}
We next expand $S^{ki}(\dfrac{x+\widehat{X}}{2})$, $T_{r}^{ki}(\dfrac{x+\widehat{X}}{2})$ and
$\partial_{i}I(x+c_{\theta}(\widehat{X}-x))$ in Taylor series. For $\displaystyle\sum_{\theta=1}^{D}b_{\theta}=1$
and $\displaystyle\sum_{\theta=1}^{D}b_{\theta}c_{\theta}=\dfrac12$, we have
\begin{align*}
\widehat{X}^{k}
&=x^{k}+\sum_{i=1}^{d}S^{ki}\partial_{i}Ih+\sum_{r=1}^{m}\sum_{i=1}^{d}T_{r}^{ki}\partial_{i}I\Delta \hat{W}_{r}\\
&\quad+\dfrac12\sum_{r=1}^{m}\sum_{i,j=1}^{d}(\partial_{j}T_{r}^{ki}\partial_{i}I+T_{r}^{ki}\partial_{ij}I)\hat\Delta^{j}\Delta \hat{W}_{r}
+\hat{R}_{1},
\end{align*}
where
\begin{align*}
&\hat{R}_{1}=h\sum_{i=1}^{d}\Big(\dfrac{1}{2}\sum_{j=1}^{d}\partial_{j}S^{ki}(x)\hat\Delta^{j}+R_{S}\Big)(\sum_{\theta=1}^{D}b_{\theta}\partial_{i}I(x+c_{\theta}(\widehat{X}-x)))\\
&\quad+h\sum_{i=1}^{d}S^{ki}(x)\Big(\sum_{\theta=1}^{D}b_{\theta}\partial_{i}I(x+c_{\theta}(\hat{X}-x))-\partial_{i}I(x)\Big)\\
&\quad+\sum_{r=1}^{m}\sum_{i,j,l=1}^{d}\Delta \hat{W}_{r}\Big(T_{r}^{ki}(x)+\dfrac12 \sum_{j=1}^{d}\partial_{j}T_{r}^{ki}(x)\hat\Delta^{j}\Big)\dfrac12 \sum_{\theta=1}^{D}b_{\theta}c_{\theta}^{2}\partial_{ijl}I(x+\xi c_{\theta}(\widehat{X}-x))\hat\Delta^{j}\hat\Delta^{l}\\
&\quad+\sum_{r=1}^{m}\sum_{i,j,l=1}^{d}\dfrac14 \Delta \hat{W}_{r}\partial_{j}T_{r}^{ki}(x)\partial_{il}I(x)\hat\Delta^{j}
\hat\Delta^{l}
+h\sum_{r=1}^{m}\sum_{i=1}^{d} R_{S}
(\sum_{\theta=1}^{D}b_{\theta}\partial_{i}I(x+c_{\theta}(\widehat{X}-x))).
\end{align*}
Similar as in the proof of Theorem~\ref{theo cohen}, we define $\hat{R_{2}}$ as
\begin{align*}
\hat R_{2}=\dfrac12\sum_{r=1}^{m}\sum_{l,j=1}^{d}(\partial_{j}T_{r}^{kl}\partial_{l}I+T_{r}^{kl}\partial_{lj}I)
\Big(\hat\Delta^{j}-\sum_{i=1}^{m}\sum_{l=1}^{d}T_{i}^{jl}\partial_{l}I\Delta \hat{W}_{i}\Big)\Delta \hat{W}_{r}.
\end{align*}
It then follows that
\begin{align*}
\widehat{X}^{k}&=x^{k}+\sum_{i=1}^{d}S^{ki}\partial_{i}Ih+\sum_{r=1}^{m}\sum_{i=1}^{d}T^{ki}\partial_{i}I\Delta \hat{W}_{r}\nonumber\\
&\quad+\dfrac12 \sum_{r=1}^{m} \sum_{i,j=1}^{d}(\partial_{j}T_{r}^{ki}\partial_{i}I+T_{r}^{ki}\partial_{ij}I)(\sum_{l=1}^{d}T_{r}^{jl}\partial_{l}I)(\Delta \hat{W}_{r})^{2}\nonumber\\
&\quad+\sum_{i=1}^{m-1}\sum_{r=i+1}^{m}\sum_{l,j=1}^{d}(\partial_{j}T_{r}^{kl}\partial_{l}I+T_{r}^{kl}\partial_{lj}I)(\sum_{l=1}^{d}T_{i}^{jl}\partial_{l}I)
(\Delta \hat{W}_{i})(\Delta \hat{W}_{r})\nonumber\\
&\quad+\hat{R}_{1}+\hat R_{2},
\end{align*}
where $|E(\hat{R}_{1}+\hat R_{2})|=\mathcal{O}(h^{2})$, $(E|\hat{R}_{1}+\hat R_{2}|^{2})^{\frac12}=\mathcal{O}(h^{\frac32})$.
Comparing the scheme \eqref{method Cohen app} with the conservative method \eqref{method Cohen},
one concludes that the mean-square convergence order of the numerical approximation \eqref{method Cohen app} is $1$.
\end{proof}

The following result can be proved using similar techniques as in the proof of Theorem~\ref{theo_weak_1}.
\begin{theorem}\label{theo_weak_2}
Let $q$ be the order of the given quadrature formula $(c_{i},b_{i})_{i=1}^{D}$.
Assume that functions $S, T_r$ and $I$ satisfy the assumptions in Theorem~\ref{theo_weak_1}.
If $q\geq2$, then for all $n=0,1,\cdots, N$
and for small enough $h$, one has
\begin{equation*}
|E\phi(X(t_{n}))-E\phi(\widehat{X}_{n})|\leq Kh,\quad \text{ for }\quad\phi\in C_{P}^{4}(\R^{d},\R),
\end{equation*}
with a positive constant $K$ independent of $h$ and $n$.
I.\,e., the method \eqref{method Cohen app} has order of accuracy $1$ in the sense of weak approximations.
\end{theorem}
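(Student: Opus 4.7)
The plan is to verify, for $p=1$, the moment conditions of the weak convergence theorem \cite[Theorem~2.1, Sect.~2.2.1]{Milstein1995} already used in the proof of Theorem~\ref{theo_weak_1}, namely
\[
\Big|E\Big(\prod_{j=1}^{s}\Delta^{i_{j}}-\prod_{j=1}^{s}\widehat\Delta^{i_{j}}\Big)\Big|\leq K(x)h^{2}, \quad s=1,2,3,
\]
together with $E\prod_{j=1}^{8}|\widehat\Delta^{i_{j}}|\leq K(x)h^{4}$, where $\widehat\Delta^{i}:=\widehat X^{i}-x^{i}$ denotes the one-step increment of the quadrature-based scheme \eqref{method Cohen app}. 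The second estimate is immediate from the $L^{2\ell}$ bound $E|\widehat X_{n+1}-\widehat X_{n}|^{2\ell}=\mathcal{O}(h^{\ell})$ recorded in \eqref{4} together with Cauchy--Schwarz.

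The key step is to reuse the one-step Taylor expansion of $\widehat X^{k}$ derived inside the proof of Theorem~\ref{theo cohen2}. There the order hypothesis $q\geq 2$ was used precisely to force the moment conditions $\sum_{\theta}b_{\theta}=1$ and $\sum_{\theta}b_{\theta}c_{\theta}=\tfrac12$, so that $\widehat X^{k}$ admits exactly the same principal expansion as $\bar X^{k}$ in \eqref{k_cons_rest}, modulo remainders $\hat R_{1}+\hat R_{2}$ satisfying
\[
|E(\hat R_{1}+\hat R_{2})|=\mathcal{O}(h^{2}),\qquad (E|\hat R_{1}+\hat R_{2}|^{2})^{1/2}=\mathcal{O}(h^{3/2}).
\]
With this expansion, the reasoning of Theorem~\ref{theo_weak_1} can be transcribed almost verbatim. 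For $s=1$ one subtracts $a^{k}(x)h$ from the drift integral in the It\^o form \eqref{k_com_X} of the exact $\Delta^{k}$ and estimates the difference using $|E\Delta^{n_{1}}(s)|=\mathcal{O}(h)$ together with the properties \eqref{proptrunc} of the truncated increments; this directly yields $|E(\Delta^{k}-\widehat\Delta^{k})|=\mathcal{O}(h^{2})$.

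The main technical effort, and the only part requiring new bookkeeping, is the verification of \eqref{est1} for $s=2$ and $s=3$. One expands $\widehat\Delta^{i_{1}}\widehat\Delta^{i_{2}}$ and $\widehat\Delta^{i_{1}}\widehat\Delta^{i_{2}}\widehat\Delta^{i_{3}}$ using the expansion of $\widehat X^{k}$ above, and compares term-by-term with the analogous representations of $\Delta^{i_{1}}\Delta^{i_{2}}$ and $\Delta^{i_{1}}\Delta^{i_{2}}\Delta^{i_{3}}$ coming from \eqref{k_com_X} via It\^o's isometry, exactly as in Theorem~\ref{theo_weak_1}. All cross contributions that couple the principal part of $\widehat\Delta$ to $\hat R_{1}+\hat R_{2}$ are handled by Cauchy--Schwarz: combining $(E|\widehat\Delta|^{2\ell})^{1/(2\ell)}=\mathcal{O}(h^{1/2})$ with $(E|\hat R_{1}+\hat R_{2}|^{2})^{1/2}=\mathcal{O}(h^{3/2})$, a typical cross term is bounded by $\mathcal{O}(h^{1/2})\cdot\mathcal{O}(h^{3/2})=\mathcal{O}(h^{2})$, as required, while terms quadratic in $\hat R_{1}+\hat R_{2}$ are smaller, of order $\mathcal{O}(h^{3})$.

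The hard part will be to keep the $s=3$ case under control, since the number of cross terms grows quickly and one must verify that no single contribution drops below the target order $\mathcal{O}(h^{2})$. However, because the quadrature condition $q\geq 2$ is tailored to preserve the same expansion as the exact-integral scheme, all these terms either match identically (and cancel against the analogous terms in the expansion of $\Delta^{i_{1}}\Delta^{i_{2}}\Delta^{i_{3}}$) or are residual contributions amenable to the Cauchy--Schwarz estimates above. Assembling these bounds gives \eqref{est1} for $s=1,2,3$, and the conclusion $|E\phi(X(t_{n}))-E\phi(\widehat X_{n})|\leq Kh$ then follows from the Milstein weak convergence theorem.
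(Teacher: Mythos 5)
Your proposal is correct and follows exactly the route the paper intends: the paper gives no written proof of Theorem~\ref{theo_weak_2}, stating only that it ``can be proved using similar techniques as in the proof of Theorem~\ref{theo_weak_1}'', and your argument is a faithful elaboration of that strategy, reusing the one-step expansion with remainders $\hat R_{1}+\hat R_{2}$ from Theorem~\ref{theo cohen2} and re-verifying the moment conditions \eqref{est1}--\eqref{est2} for $p=1$. The Cauchy--Schwarz bookkeeping for the cross terms in the $s=2,3$ cases is handled correctly, so no gap remains.
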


\section{Splitting approach}\label{sec-split}
Let us begin by recalling the SG formulation of our problem
\begin{equation}\label{splitting SG system}
\text dX(t)=S(X)\nabla I(X)\,\text dt+\sum_{r=1}^{m}T_{r}(X)\nabla I(X)\circ\,\text dW_{r}(t),
\end{equation}
where $S(X)$ and $T_r(X)$ are skew symmetric matrices.
The purpose of this section is to derive new numerical methods
for the above problem while preserving the conserved quantity $I$
on the basis of splitting techniques,
see also the works \cite{Zhang2011,Malham2008,m00} for similar ideas.

Let us first rewrite system \eqref{splitting SG system} as
\begin{equation*}
\text dX(t)=V_{0}(X)\,\text dt+\sum_{r=1}^{m}V_{r}(X)\circ\,\text dW_{r}(t),
\end{equation*}
where the vector fields $V_{0}$ and $V_{r}$ are defined by
\[V_{0}=\sum_{i=1}^{d}(S\nabla I)^{i}\partial_{i}\quad\text{ and }
\quad V_{r}=\sum_{i=1}^{d}(T_{r}\nabla I)^{i}\partial_{i},\quad r=1,\cdots,m.\]
Let $\Gamma$ be a set of multi-indices $\alpha$:
$\Gamma=\{\alpha=(\alpha_1,\alpha_2,\cdots,\alpha_\ell)\in\mathbb{N}_0^{\ell}\}$.
We denote by $|\Gamma|$ the number of elements of the set $\Gamma$.
We next split the above vector fields as
\[V_{0}=\sum_{\alpha\in\Gamma}V_{0}^{\alpha}\quad\text{ and }\quad V_{r}=\sum_{\alpha\in\Gamma}V_{r}^{\alpha},\quad r=1,\cdots,m,\]
such that there exist skew-symmetric matrices $S^{\alpha}$, $T_{r}^{\alpha}$ satisfying
$V_{0}^{\alpha}(X)=S^{\alpha}(X)\nabla I(X)$ and $V_{r}^{\alpha}(X)=T_{r}^{\alpha}(X)\nabla I(X)$ for $r=1,\cdots,m$.

The original system can then be divided into $|\Gamma|$ subsystems: $\forall \alpha\in\Gamma$
\begin{align}\label{subsystem}
\text  dX_{[\alpha]}&=V_{0}^{\alpha}(X_{[\alpha]})\,\text dt+\sum_{r=1}^{m}V_{r}^{\alpha}(X_{[\alpha]})\circ\text dW_{r}(t)\nonumber\\[-1.5ex]
\\[-1.5ex]
  &=(S^{\alpha}\nabla I)(X_{[\alpha]})\,\text dt+\sum_{r=1}^{m}(T_{r}^{\alpha}\nabla I)(X_{[\alpha]})\circ\text dW_{r}(t).\nonumber
\end{align}
It is thus natural to apply the conservative method \eqref{method Cohen} to each subsystems.
Denote by $\bar{X}_{[\alpha]}(\lambda, x):=\bar{X}_{[\alpha]}(\lambda)\circ\, x$, $\alpha\in\Gamma$,
$\lambda=1$ or $\frac12$ the corresponding one-step or half-step numerical approximation to \eqref{subsystem}.
We further define $\bar{Y}_{t,x}(t+h)$ by
\begin{align*}
\bar{Y}_{t,x}(t+h)=\bar{X}_{[\alpha_{1}]}(\dfrac12)\circ\, \bar{X}_{[\alpha_{2}]}(\dfrac12)\circ\,
\cdots \circ\, \bar{X}_{[\alpha_{|\Gamma|}]}(1)
\circ\,\cdots \circ\,\bar{X}_{[\alpha_{1}]}(\dfrac12)\circ\, x.
\end{align*}
Accordingly, using the above one-step numerical approximation,
we recurrently construct the composition scheme $\bar{Y}_{n}$, $n=0,1,\cdots,N$, by
\begin{equation}\label{composition}
\bar{Y}_{n+1}=\bar{Y}_{t_{n},\bar{Y}_{n}}(t_{n}+h),\quad \bar{Y}_{0}=X_{0}.
\end{equation}
Now, we introduce some notations and present a lemma, which lead to
the conclusion that the above composition scheme is of weak order $1$ and
of mean-square order $1$ in the case of commutative noise.
Denote $\phi_{[\alpha]}(\lambda,\tilde{x}):=\phi_{[\alpha]}(\lambda)\cdot \tilde{x}$, $\alpha\in\Gamma$,
$\lambda=1$ or $\frac12$, the numerical approximation defined by
\begin{equation*}
\phi_{[\alpha]}(\lambda,\tilde{x})=\text{exp}(\lambda h V_{0}^{\alpha}+
\lambda\sum_{r=1}^{m}\Delta\hat{W_{r}}V_{r}^{\alpha})\tilde{x},\quad \lambda=1 \text{ or }\frac12.
\end{equation*}
Accordingly, let $Z_{t,x}(t+h)$ be another one-step numerical approximation to the exact solution
of \eqref{splitting SG system} on $[t,t+h]$, which is defined by
\begin{align*}
Z_{t,x}(t+h)=\phi_{[\alpha_{1}]}(\dfrac12)\circ\,\phi_{[\alpha_{2}]}(\dfrac12)\circ\,
\cdots \circ\, \phi_{[\alpha_{|\Gamma|}]}(1)
\circ\,
 \cdots \circ\, \phi_{[\alpha_{1}]}(\dfrac12)\circ\, x.
\end{align*}
Using our previous results on mean-square and weak convergence orders, the following results can be proved
using similar ideas as in the proof of \cite[Lemma~3.2]{Zhang2011}.
\begin{lemma}\label{lemma1}
Assume that Milstein's scheme converges with mean-square order $1$ when applied to \eqref{subsystem}.
We have the following estimates for the one-step approximation $Z_{t,x}(t+h)$:
\begin{enumerate}
\item [{\rm(i)}] Under the condition of Theorem~\ref{theo cohen}, we have
\begin{align*}
&|E(X_{t,x}(t+h)-Z_{t,x}(t+h))|=\mathcal{O}(h^2),\nonumber\\
&(E|X_{t,x}(t+h)-Z_{t,x}(t+h)|^{2})^{\frac12}=\mathcal{O}(h^{\frac32}).
\end{align*}
\item [{\rm(ii)}] Under the condition of Theorem~\ref{theo_weak_1}, for $s=1,2,3,$ we have
\begin{align*}
|E\Big(\prod_{j=1}^{s}(X_{t,x}(t+h)-x)^{i_{j}}-\prod_{j=1}^{s}(Z_{t,x}(t+h)-x)^{i_{j}}\Big)|
=\mathcal{O}(h^2).
\end{align*}
\end{enumerate}
\end{lemma}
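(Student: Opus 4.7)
The plan is to mimic the proof of Lemma~3.2 in \cite{Zhang2011}, which handled the scalar-noise version, and extend it to our multi-dimensional Stratonovich setting with an arbitrary splitting index set $\Gamma$. The argument naturally breaks into three stages.

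First, I would expand each factor $\phi_{[\alpha]}(\lambda,\tilde x)=\exp\bigl(\lambda h V_{0}^{\alpha}+\lambda\sum_{r}\Delta\hat{W}_{r}V_{r}^{\alpha}\bigr)\tilde x$ as a Taylor polynomial in $h$ and $\Delta\hat{W}_{r}$. Assigning weight $1$ to $h$ and weight $1/2$ to each $\Delta\hat{W}_{r}$, I would keep all terms up to total weight $3/2$ for part (i) and up to weight $3$ for part (ii). The boundedness of $S^{\alpha}$, $T_{r}^{\alpha}$, $\nabla I$ and their derivatives, together with the moment bounds \eqref{proptrunc}, then show that the truncation remainder of each factor has $L^{2}$-norm $\mathcal{O}(h^{3/2})$ and mean $\mathcal{O}(h^{2})$.

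Second, I would compose the $2|\Gamma|-1$ factors in the palindromic Strang order used in the definition of $Z_{t,x}(t+h)$. The symmetry of the composition ensures that the first-order BCH-type commutator contributions arising from adjacent pairs of factors cancel between the left and right halves of the palindrome. After this cancellation, the composition expands as
\begin{equation*}
Z_{t,x}(t+h)=x+hV_{0}(x)+\sum_{r=1}^{m}\Delta\hat{W}_{r}V_{r}(x)+\tfrac{1}{2}\sum_{r,s=1}^{m}\Delta\hat{W}_{r}\Delta\hat{W}_{s}\,V_{s}V_{r}(x)+\mathcal{R},
\end{equation*}
with $V_{0}=\sum_{\alpha}V_{0}^{\alpha}$, $V_{r}=\sum_{\alpha}V_{r}^{\alpha}$, and $|E\mathcal{R}|=\mathcal{O}(h^{2})$, $(E|\mathcal{R}|^{2})^{1/2}=\mathcal{O}(h^{3/2})$. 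Third, I would subtract from this a Stratonovich-Taylor expansion of the exact flow $X_{t,x}(t+h)$ truncated at the same weights; matching single and double iterated integrals, the remaining discrepancy arises from (a) the terms $\Delta W_{r}-\Delta\hat{W}_{r}$ and $\Delta W_{r}\Delta W_{s}-\Delta\hat{W}_{r}\Delta\hat{W}_{s}$, controlled by \eqref{proptrunc}, and (b) the antisymmetric parts of the iterated integrals, which vanish under the commutativity hypothesis invoked in (i) and otherwise enter only through triple products whose expectations are $\mathcal{O}(h^{2})$, sufficient for (ii).

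The main obstacle is the combinatorics of Stage 2: organising the palindromic composition so that the commutator cancellations are transparent, and verifying that once the smoke clears only the total sums $\sum_{\alpha}V_{0}^{\alpha}$ and $\sum_{\alpha}V_{r}^{\alpha}$ survive at the required orders, rather than stray $\alpha$-dependent contributions. Once that algebraic matching is in place, the remaining analytic estimates are routine applications of \eqref{proptrunc}, the smoothness hypotheses on $S^{\alpha}$, $T_{r}^{\alpha}$, $\nabla I$, and the single-step bounds already established in Theorems~\ref{theo cohen} and \ref{theo_weak_1}, the latter being used in the form guaranteed by the assumption that Milstein's scheme has mean-square order one on each subsystem \eqref{subsystem}.
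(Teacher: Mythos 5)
Your proposal is sound and follows essentially the route the paper intends: the paper does not write out a proof of Lemma~\ref{lemma1} but refers to ``similar ideas as in the proof of Lemma~3.2 in \cite{Zhang2011}'' together with the earlier one-step expansions, which amounts to exactly your plan of expanding each exponential factor, using the palindromic (Strang) symmetry to cancel the leading commutators so that only $hV_0+\sum_r\Delta\hat{W}_rV_r$ plus a symmetric second-order term survives, and then matching against the Stratonovich--Taylor expansion of $X_{t,x}(t+h)$ with the truncation and L\'evy-area discrepancies controlled via \eqref{proptrunc} and the commutativity hypothesis. You correctly isolate the one genuinely delicate point (the cancellation of $\alpha$-cross commutators, which is where the symmetric ordering of the composition is actually needed), so no gap remains.
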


The above result permits us to show the next theorem.
\begin{theorem}\label{theo cohen3}
Assume that each subsystems \eqref{subsystem} have
commutative noise so that Milstein's scheme converges with mean-square order $1$.
The composition method \eqref{composition} has the following properties
\begin{enumerate}
\item [{\rm (i)}] It preserves exactly the scalar invariant $I$.
\item [{\rm (ii)}] Under the conditions of Theorem~\ref{theo cohen}, it has mean-square order of convergence $1$.
\item [{\rm (iii)}] Under the conditions of Theorem~\ref{theo_weak_1}, it is of weak order $1$.
\end{enumerate}
\end{theorem}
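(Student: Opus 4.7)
For (i), each factor $\bar X_{[\alpha]}(\lambda,\cdot)$ in the composition \eqref{composition} is the conservative method \eqref{method Cohen} applied to the subsystem \eqref{subsystem}, whose drift and diffusion retain the skew-gradient form with skew-symmetric $S^\alpha,T_r^\alpha$ and the same invariant $I$. Proposition \ref{propo1} therefore guarantees that each factor leaves $I$ fixed, and since $\bar Y_{n+1}$ is a finite composition of such invariance-preserving maps, $I(\bar Y_{n+1})=I(\bar Y_n)$ for every $n$.

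For (ii) and (iii), the plan is to invoke the fundamental mean-square convergence theorem of \cite[Theorem~2.1, Sect.~2.2.1]{Milstein1995} and the weak analogue \cite[Theorem~14.5.2]{Kloeden1992} for the global scheme $\bar Y_n$. Both results reduce the global statements to one-step local estimates comparing $\bar Y_{t,x}(t+h)$ against the exact flow $X_{t,x}(t+h)$: namely $|E(X_{t,x}(t+h)-\bar Y_{t,x}(t+h))|=\mathcal O(h^2)$ and $(E|X_{t,x}(t+h)-\bar Y_{t,x}(t+h)|^2)^{1/2}=\mathcal O(h^{3/2})$ for (ii), and moment-product estimates
\[
\Big|E\Big(\prod_{j=1}^{s}(X_{t,x}(t+h)-x)^{i_j}-\prod_{j=1}^{s}(\bar Y_{t,x}(t+h)-x)^{i_j}\Big)\Big|=\mathcal O(h^2),\quad s=1,2,3,
\]
for (iii) (the eighth-moment bound on the $\bar Y$-increments required by the weak theorem reduces to a Cauchy--Schwarz argument once the mean-square bounds are available). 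Lemma \ref{lemma1} already supplies all of these bounds but with the auxiliary exponential-based splitting $Z_{t,x}(t+h)$ in place of $\bar Y_{t,x}(t+h)$, so it suffices to control $\bar Y_{t,x}(t+h)-Z_{t,x}(t+h)$ in the same orders and conclude by the triangle inequality.

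To compare a single stage, I would Taylor expand $\bar X_{[\alpha]}(\lambda,x)$ around $x$ in $h$ and $\Delta\hat W_r$, exactly as in the proof of Theorem \ref{theo cohen}, and likewise expand $\phi_{[\alpha]}(\lambda,x)=\exp\bigl(\lambda h V_0^\alpha+\lambda\sum_r\Delta\hat W_r V_r^\alpha\bigr)x$. Both expansions reproduce the Milstein one-step scheme for subsystem \eqref{subsystem} up to remainders with mean $\mathcal O(h^2)$ and $L^2$-norm $\mathcal O(h^{3/2})$, whence
\[
|E(\bar X_{[\alpha]}(\lambda,x)-\phi_{[\alpha]}(\lambda,x))|=\mathcal O(h^2),\quad (E|\bar X_{[\alpha]}(\lambda,x)-\phi_{[\alpha]}(\lambda,x)|^2)^{1/2}=\mathcal O(h^{3/2}),
\]
together with the analogous $\mathcal O(h^2)$ bounds on moment-product differences up to order three.

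The main obstacle is then to telescope these single-stage bounds through the fixed finite number $2|\Gamma|-1$ of stages composing $\bar Y_{t,x}(t+h)$ and $Z_{t,x}(t+h)$. I would proceed by induction on the stage index, using two ingredients: Lipschitz stability of each factor $\bar X_{[\alpha]}(\lambda,\cdot)$ and $\phi_{[\alpha]}(\lambda,\cdot)$ in its starting argument uniformly for small $h$, which follows from the $C_b^k$ hypotheses on $S^\alpha,T_r^\alpha,\nabla I$ and from the implicit function theorem applied to the defining equation \eqref{method Cohen}; and uniform polynomial-moment bounds on all intermediate states, provided by the truncation estimates \eqref{proptrunc} together with \cite[Lemma~2.2, Sect.~2.2.1]{Milstein1995}. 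Because $|\Gamma|$ is $h$-independent, the propagated constants remain so. Feeding the resulting global one-step estimates into the fundamental theorems then completes (ii) and (iii).
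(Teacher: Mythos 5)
Your proposal is correct and follows essentially the same route as the paper: part (i) via Proposition~\ref{propo1} applied to each factor, and parts (ii)--(iii) via Lemma~\ref{lemma1} for $X_{t,x}(t+h)-Z_{t,x}(t+h)$ combined with a stagewise induction showing that each $\bar X_{[\alpha]}(\lambda,\cdot)$ and $\phi_{[\alpha]}(\lambda,\cdot)$ agrees with the Milstein step up to remainders of mean $\mathcal O(h^2)$ and $L^2$-size $\mathcal O(h^{3/2})$, then concluding by the triangle inequality and the fundamental convergence theorems. Your added remarks on Lipschitz stability of the stage maps and uniform moment bounds only make explicit what the paper's induction step leaves implicit.
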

\begin{proof}
The first point is a direct consequence from the skew-symmetry of the matrices $S^{\alpha}$ and $T_r^{\alpha}$
and the result from Section~\ref{sec-prop}.

For the orders of convergence, we let $e_{1}=X_{t,x}(t+h)-Z_{t,x}(t+h)$
and $e_{2}=Z_{t,x}(t+h)-Y_{t,x}(t+h)$, then $e:=e_{1}+e_{2}=X_{t,x}(t+h)-Y_{t,x}(t+h)$
is the one-step approximation error of $Y_{t,x}(t+h)$.
Corresponding to the expressions of $Y_{t,x}(t+h)$ and $Z_{t,x}(t+h)$, we let
\begin{align*}
&x_{1}=x,\tilde{x}_{1}=x;\\
&x_{2}=\bar{X}_{[\alpha_{1}]}(\dfrac12)\circ\, x=\bar{X}_{[\alpha_{1}]}(\dfrac12,x_{1}), \quad \tilde{x}_{2}=\phi_{[\alpha_{1}]}(\dfrac12)\circ\, x=\phi_{[\alpha_{1}]}(\dfrac12,\tilde{x}_{1}),\\
&x_{3}=\bar{X}_{[\alpha_{2}]}(\dfrac12)\circ\,\bar{X}_{[\alpha_{1}]}(\dfrac12)\circ\, x=\bar{X}_{[\alpha_{2}]}(\dfrac12,x_{2}), \\ &\tilde{x}_{3}=\phi_{[\alpha_{2}]}(\dfrac12)\circ\,\phi_{[\alpha_{1}]}(\dfrac12)\circ\, x=\phi_{[\alpha_{2}]}(\dfrac12,\tilde{x}_{2}),\\
&...\\
&x_{|\Gamma|}=\bar{X}_{[\alpha_{1}]}(\dfrac12)\circ\, \bar{X}_{[\alpha_{2}]}(\dfrac12) \cdots   \bar{X}_{[\alpha_{|\Gamma|}]}(1)\cdots \bar{X}_{[\alpha_{1}]}(\dfrac12)\circ\, x=\bar{X}_{[\alpha_{1}]}(\dfrac12,x_{|\Gamma|-1}),\\
&\tilde{x}_{|\Gamma|}=\phi_{[\alpha_{1}]}(\dfrac12)\circ\, \phi_{[\alpha_{2}]}(\dfrac12) \cdots  \phi_{[\alpha_{|\Gamma|}]}(1)\cdots \phi_{[\alpha_{1}]}(\dfrac12)
\circ\,x=\phi_{[\alpha_{1}]}(\dfrac12,x_{|\Gamma|-1}),
\end{align*}
where $x_{|\Gamma|}=Y_{t,x}(t+h)$, $\tilde{x}_{|\Gamma|}=Z_{t,x}(t+h)$.
\begin{enumerate}
\item [(ii)] From Lemma~\ref{lemma1}, we know that $|Ee_{1}|=\mathcal{O}(h^{2})$ and
$(E|e_{1}|^{2})^{\frac12}=\mathcal{O}(h^{\frac32})$.
Next we estimate $e_{2}$ by induction on the index of the sequence $x_k-\tilde x_k$.
We recall that $\bar{X}_{[\alpha]}(\lambda,x)$ denotes the numerical solution to
the subsystem \eqref{subsystem} given by the scheme \eqref{method Cohen}.
From the mean-square convergence analysis in Theorem~\ref{theo cohen}
and comparing with Milstein's method, we know that
\begin{equation}\label{barX}
\bar{X}_{[\alpha]}(\lambda,x)=X^{mil}_{[\alpha]}(\lambda,x)+R_{[\alpha]}
\end{equation}
with $|ER_{[\alpha]}|=\mathcal{O}(h^{2})$ and $(E|R_{[\alpha]}|^{2})^{\frac12}=\mathcal{O}(h^{\frac32})$.
Here the expression of $X_{mil}^{[\alpha]}(\lambda,x)$ reads
\begin{align*}\label{milstein}
X^{mil}_{[\alpha]}(\lambda,x)&=x+\lambda h (S^{\alpha}\nabla I)(x)
+\sum_{r=1}^{m}\lambda \Delta W_{r}(T_{r}^{\alpha}\nabla I)(x)\nonumber\\
&\quad+\frac{\lambda^{2}}{2}\sum_{i=1}^{m}\sum_{r=1}^{m}\Lambda_{i}(T_{r}^{\alpha}\nabla I)(x)\Delta W_{i}\Delta W_{r}.
\end{align*}
On the other hand, from the definition of $\phi_{[\alpha]}(\lambda,x)$, it's not difficult to show that
\begin{equation}\label{phi}
\phi_{[\alpha]}(\lambda,\tilde{x})=X^{mil}_{[\alpha]}(\lambda,\tilde{x})+Q_{[\alpha]}
\end{equation}
with $|EQ_{[\alpha]}|=\mathcal{O}(h^{2})$ and $(E|Q_{[\alpha]}|^{2})^{\frac12}=\mathcal{O}(h^{\frac32})$.

We can now start the proof by induction. For the case $k=1$: Since $x_{1}=\tilde{x}_{1}$, one has
\[ |E(x_{2}-\tilde{x}_{2})|=\mathcal{O}(h^{2}),\quad (E|x_{2}-\tilde{x}_{2}|^{2})^{\frac12}=\mathcal{O}(h^{\frac32}).\]
Suppose now that $|E(x_{k}-\tilde{x}_{k})|=\mathcal{O}(h^{2})$ and $(E|x_{k}-\tilde{x}_{k}|^{2})^{\frac12}=\mathcal{O}(h^{\frac32})$.
The estimates
\[ |E(x_{k+1}-\tilde{x}_{k+1})|=\mathcal{O}(h^{2}),\quad (E|x_{k+1}-\tilde{x}_{k+1}|^{2})^{\frac12}=\mathcal{O}(h^{\frac32}),\]
follow from equations \eqref{barX}-\eqref{phi}.
This finally shows that $|Ee_{2}|=\mathcal{O}(h^{2})$ and $(E|e_{2}|^{2})^{\frac12}=\mathcal{O}(h^{\frac32})$.
The triangle inequality gives
\[|Ee|=\mathcal{O}(h^{2}),\quad (E|e|^{2})^{\frac12}=\mathcal{O}(h^{\frac32})\]
which shows that the composition method \eqref{composition} is of mean-square order $1$.
\item [(iii)] To prove the weak order of convergence of the composition method, we shall show that, for $s=1,2,3$,
\[ |E\Big(\prod_{j=1}^{s}(Y_{t,x}(t+h)-x)^{i_{j}}-\prod_{j=1}^{s}(Z_{t,x}(t+h)-x)^{i_{j}}\Big)|=\mathcal{O}(h^{2}).\]
This is again completed by induction.
The above estimates are satisfied for $k=1$.
Suppose now that the following estimates hold at the stage $k$,
\[ |E\Big(\prod_{j=1}^{s}(x_{k}-x)^{i_{j}}-\prod_{j=1}^{s}(\tilde{x}_{k}-x)^{i_{j}}\Big)|=\mathcal{O}(h^{2}),\quad s=1,2,3.\]
Next we show that they also hold at the stage $k+1$.
For ease of presentation, we only give details for the case $s=1$. The proofs for $s=2,3$ are similar.
From \eqref{barX} and \eqref{phi}, we have
\begin{align*}
(x_{k+1}-x)^{i_{1}}-(\tilde{x}_{k+1}-x)^{i_{1}}&=\Big((X_{[\alpha]}^{mil}(\lambda,x_{k})-x_{k})^{i_{1}}
-(X_{[\alpha]}^{mil}(\lambda,\tilde{x}_{k})-\tilde{x}_{k})^{i_{1}}\Big)\\
&\quad+\Big((x_{k}-x)^{i_{1}}-(\tilde{x}_{k}-x)^{i_{1}}\Big)+R_{[\alpha]}^{i_{1}}+Q_{[\alpha]}^{i_{1}}.
\end{align*}
Thus from the expression of $X_{[\alpha]}^{mil}$ and our assumptions, we obtain
\[\Big|E\Big((x_{k+1}-x)^{i_{1}}-(\tilde{x}_{k+1}-x)^{i_{1}}\Big)\Big|=\mathcal{O}(h^{2}).\]
A recurrence thus show the estimates, for $s=1,2,3$,
\[ |E\Big(\prod_{j=1}^{s}(Y_{t,x}(t+h)-x)^{i_{j}}-\prod_{j=1}^{s}(Z_{t,x}(t+h)-x)^{i_{j}}\Big)|=\mathcal{O}(h^{2}),\]
which, using  Lemma~\ref{lemma1}, show that the composition method \eqref{composition}
has weak order $1$ of convergence.
\end{enumerate}
\end{proof}

As before, one can show that if the numerical method \eqref{method Cohen app}
is used in the composition method, i.\,e. a quadrature formula of order $\geq2$ is employed, then
the mean-square as well as the weak order remain the same.

\section{Numerical experiments}\label{sec-num}
In this section, we present numerical experiments to support and supplement the above theoretical results.
\subsection{Experiment 1}
Let us first consider a problem satisfying the hypothesis of Theorems~\ref{theo cohen} and~\ref{theo_weak_1}:
a stochastic perturbation of a mathematical pendulum
\begin{align*}
\text d\begin{pmatrix}p\\q \end{pmatrix}=\begin{pmatrix}0 &-1\\1 & 0\end{pmatrix}\begin{pmatrix}p\\ \sin(q) \end{pmatrix}\text dt
+\begin{pmatrix}0& -\cos(q)\\ \cos(q) & 0\end{pmatrix}\begin{pmatrix}p\\ \sin(q) \end{pmatrix} \Big(c_1 \circ \text dW_{1}(t) +c_2\circ\text dW_{2}(t)\Big) 
\end{align*}
with initial values $p(0)=0.2$ and $q(0)=1$, $W_1(t)$ and $W_2(t)$ being two independent Wiener processes.
The energy $I(p,q)=\frac12 p^2-\cos(q)$ is an invariant of this problem.

\begin{figure}[htbp]
\centering
\subfigure{\includegraphics[width=0.4\textwidth]{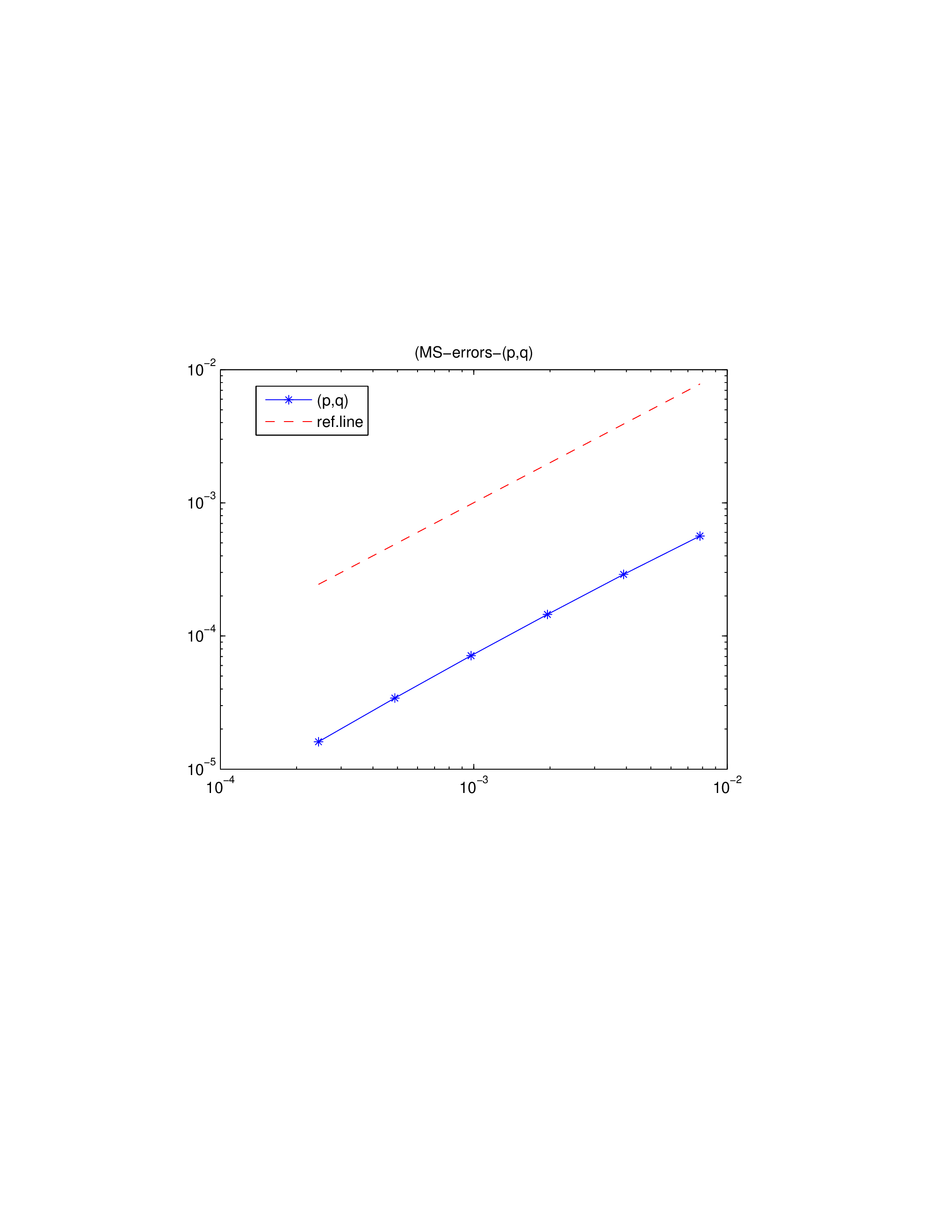}}
\subfigure{\includegraphics[width=0.4\textwidth]{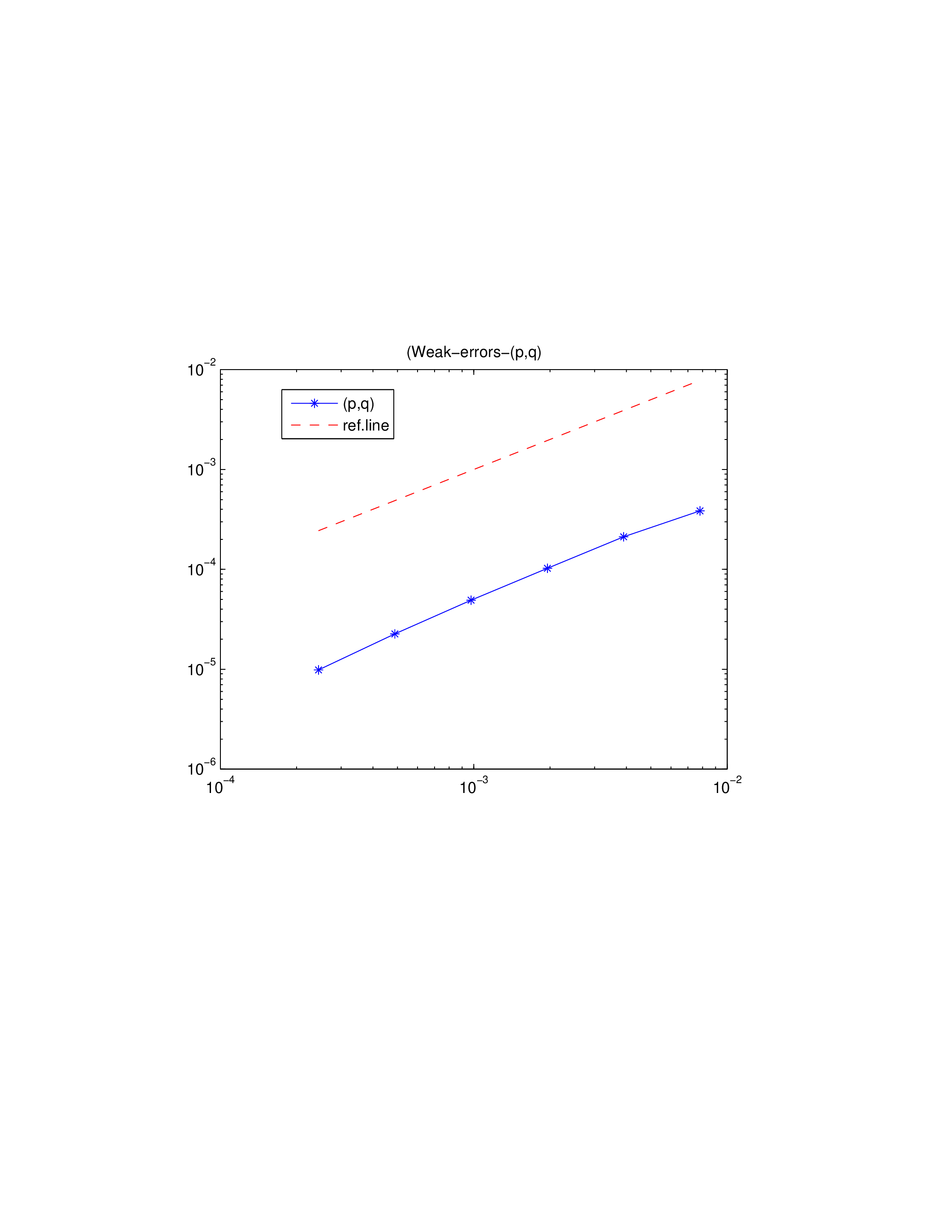}}
\caption{Stochastic pendulum with $c_1=1$ and $c_2=0.5$.
Left: Mean-square order, Right: Weak order.
Endpoint errors versus decreasing step sizes $h$
in log-log scale.
The reference lines have slope $1$. }
\label{E1figure1}
\end{figure}
Figure~\ref{E1figure1} displays the convergence order in both mean-square and weak sense.
From Theorems~\ref{theo cohen} and~\ref{theo_weak_1},
we know that the conservative scheme \eqref{method Cohen}
is of order $1$ in the mean-square, resp. weak sense for this stochastic mathematical pendulum problem.
The errors are computed at the endpoint $T_{N}=1$, the reference solution is computed using the step size
$h_{\text{exact}}=2^{-14}$ and the expectation is realised using the average
of $1000$ independent pathes. We can observe from Figure~\ref{E1figure1} (left) a mean-square order of
convergence one for the conservative scheme \eqref{method Cohen}.
The right picture shows the convergence order of
 $|E\big(\psi(p(T_N),q(T_N))-\psi(p_N,q_N)\big)|$ with the function $\psi(p,q)=\sin(p)+ q^2$.
The reference line has slope $1$, and we observe that the convergence orders
are consistent with our theoretical results.

\subsection{Experiment 2}
We are also interested in the following example, whose coefficients do not satisfy the hypotheses of
our main theorems. However, numerical results show that the convergence orders still coincide
with our theoretical assertions. We may say that our theory suits for a broader class of problems
than we claimed, and the study for the optimal assumptions is an open problem.
In order to illustrate this, we consider the cyclic Lotka-Volterra
(with commutative noise) \cite{Misawa1999}
\begin{align*}
\text d\begin{pmatrix}x^{1}\\x^{2}\\x^{3}\end{pmatrix}&=
  \begin{pmatrix}x^{1}(x^{3}-x^{2})\\x^{2}(x^{1}-x^{3})\\x^{3}(x^{2}-x^{1})\end{pmatrix}\text dt
  +\begin{pmatrix}x^{1}\\x^{2}\\-2x^{3}\end{pmatrix}\circ\text  dW_{1}
  +\begin{pmatrix}x^{1}\\-2x^{2}\\x^{3}\end{pmatrix}\circ\text  dW_{2}
+\begin{pmatrix}-2x^{1}\\x^{2}\\x^{3}\end{pmatrix}\circ\text  dW_{3}.
\end{align*}
This problem has the conserved quantity $I(x)=x^{1}x^{2}x^{3}$
and possesses the following skew gradient form \eqref{SG system}
\begin{align*}
\text d\begin{pmatrix}x^{1}\\x^{2}\\x^{3}\end{pmatrix}&=
  \begin{pmatrix}0 & 1 & -1\\ -1& 0 & 1\\ 1 & -1 & 0\end{pmatrix}\nabla I(x)\,\text dt+
  \begin{pmatrix}0 & \frac{1}{2x^{3}} & \frac{1}{2x^{2}}\\[2mm] -\frac{1}{2x^{3}}& 0 & \frac{3}{2x^{1}}\\[2mm]
 -\frac{1}{2x^{2}} & -\frac{3}{2x^{1}} & 0\end{pmatrix}\nabla I(x)\circ\text  dW_{1}\\[2mm]
  &+\begin{pmatrix}0 & \frac{1}{2x^{3}} & \frac{1}{2x^{2}}\\[2mm] -\frac{1}{2x^{3}}& 0 & -\frac{3}{2x^{1}}\\[2mm]
-\frac{1}{2x^{2}} & \frac{3}{2x^{1}} & 0\end{pmatrix}\nabla I(x)\circ\text  dW_{2}
  +\begin{pmatrix}0 & -\frac{1}{2x^{3}} & -\frac{1}{2x^{2}}\\[2mm] \frac{1}{2x^{3}}& 0 & \frac{3}{2x^{1}}\\[2mm]
\frac{1}{2x^{2}} & -\frac{3}{2x^{1}} & 0\end{pmatrix}\nabla I(x)\circ\text dW_{3}.
\end{align*}
We will now numerically integrate this problem on the interval $[0,1]$
using the initial values $x_{0}=(0.01,0.01,0.01)^{T}$.

From Theorems~\ref{theo cohen} and~\ref{theo_weak_1},
we know that the conservative scheme \eqref{method Cohen}
is of order $1$ in the mean-square, resp. weak sense.
Aiming at verifying these convergence orders,
we compute the errors at the endpoint $T_{N}=1$,
the expectation is realized using the average
of $1000$ independent pathes.
The left part of Figure~\ref{E2figure1}
displays the mean-square errors.
The lines with $*$ represent the relative errors
$\displaystyle\frac{(E|y(T_{N})-y_{N}|^2)^{1/2}}{(E|y(T_{N})|^{2})^{1/2}}$
with $y$ being $x^{1}$, $x^{2}$,
$x^{3}$ or $x$.
The right part of Figure~\ref{E2figure1}
displays the weak errors.
The lines with $*$ represents the relative errors
$\displaystyle\frac{|E(\psi(y(T_{N}))-\psi(y_{N}))|}{|E\psi(y(T_{N}))|}$
with the function $\psi(x)$
being $x^{1}x^{2}$, $x^{2}x^{3}$, $(x^{1})^{2}$ or $|x|^{2}$.
The reference solution $y(T_{N})$ is computed
using the stochastic midpoint scheme with stepsize $h=2^{-14}$
and the numerical solutions $y_{N}$ are computed using method \eqref{method Cohen}.
We observe the desired convergence orders for the conservative scheme \eqref{method Cohen}.

\begin{figure}[htbp]
\centering
\subfigure{\includegraphics[width=0.48\textwidth]{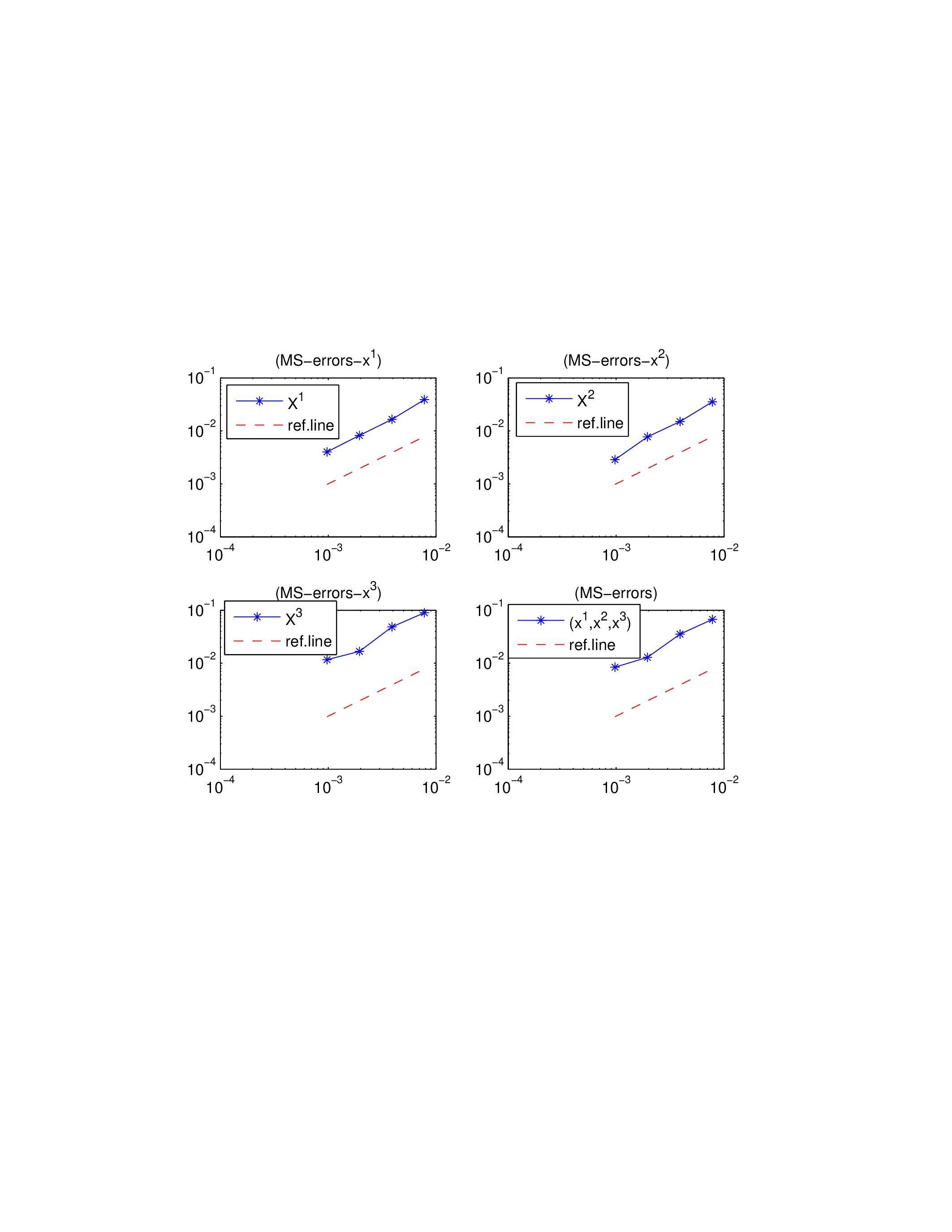}}
\subfigure{\includegraphics[width=0.48\textwidth]{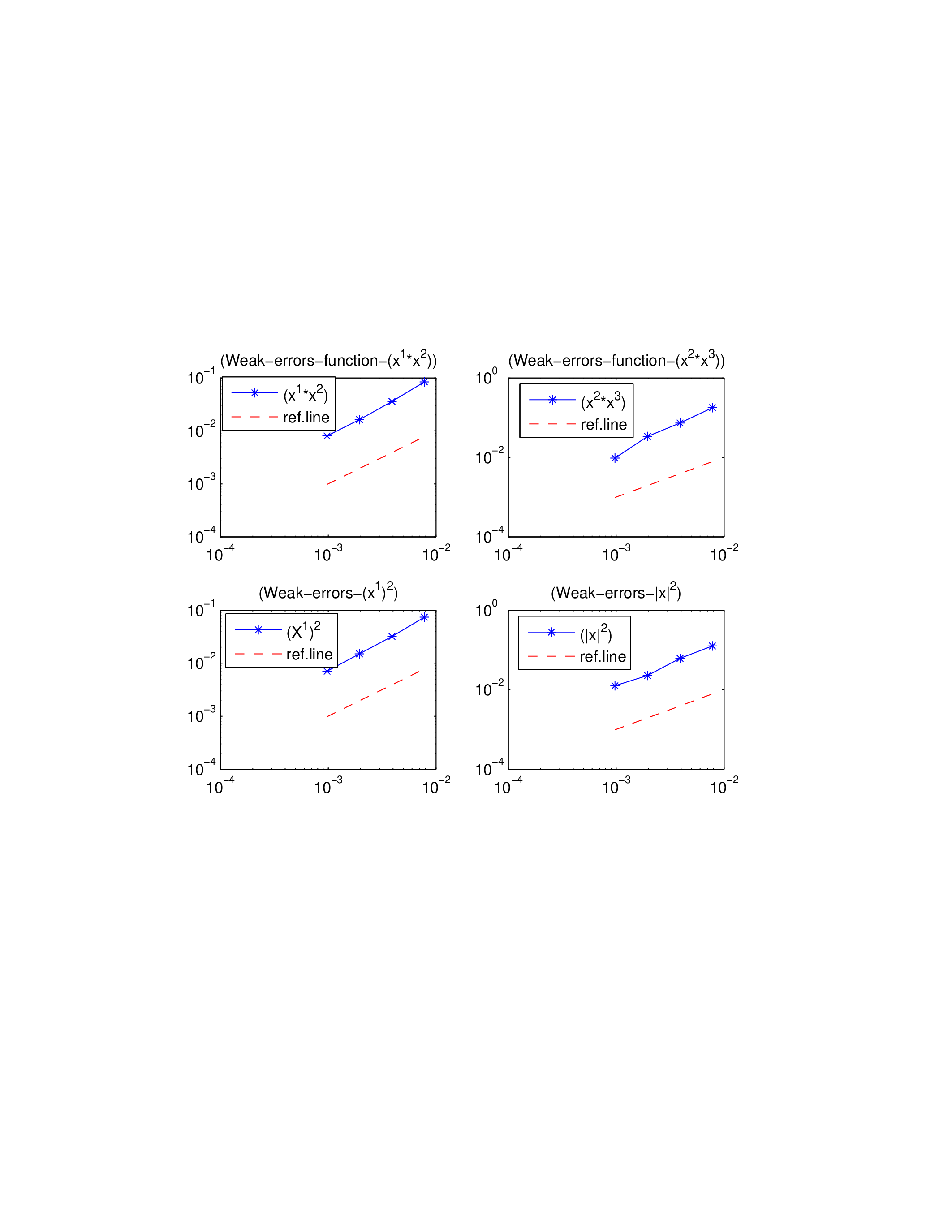}}
\caption{(Conservative scheme \eqref{method Cohen}.
Left: Mean-square order, Right: Weak order)
Endpoint errors versus decreasing step sizes $h$
in log-log scale for the stochastic
cyclic Lotka-Volterra system.
The reference lines have slope $1$.}
\label{E2figure1}
\end{figure}

We next repeat the same numerical experiments using the numerical method
\eqref{method Cohen app} with the classical midpoint rule.
We obtain similar plots as in the above experiments thus confirming
the convergence results from Theorems~\ref{theo cohen2} and~\ref{theo_weak_2}.
The plots are however not presented.

We finally apply a composition scheme to the cyclic
Lotka-Volterra system in order to verify the conclusions
of Theorem~\ref{theo cohen3}. To do this, we choose the set $\Gamma=\{12,13,23\}$
and consider $V_{0}^{ij}=S^{ij}\partial_{j}I\partial i-S^{ji}\partial_{i}I\partial_{j}$
and $V_{r}^{ij}=T_{r}^{ij}\partial_{j}I\partial i-T_{r}^{ji}\partial_{i}I\partial_{j}$
for $\alpha=ij\in\Gamma$. For the above systems,
the composition method \eqref{composition} reads
\[Y_{n+1}=\bar{X}_{[12]}(\dfrac12)\circ\,\bar{X}_{[13]}(\dfrac12)\circ\,\bar{X}_{[23]}(1)
\circ\,\bar{X}_{[13]}(\dfrac12)\circ\,\bar{X}_{[12]}(\dfrac12)\circ\, Y_{n}.\]

\begin{figure}[htbp]
\centering
\subfigure{\includegraphics[width=0.48\textwidth]{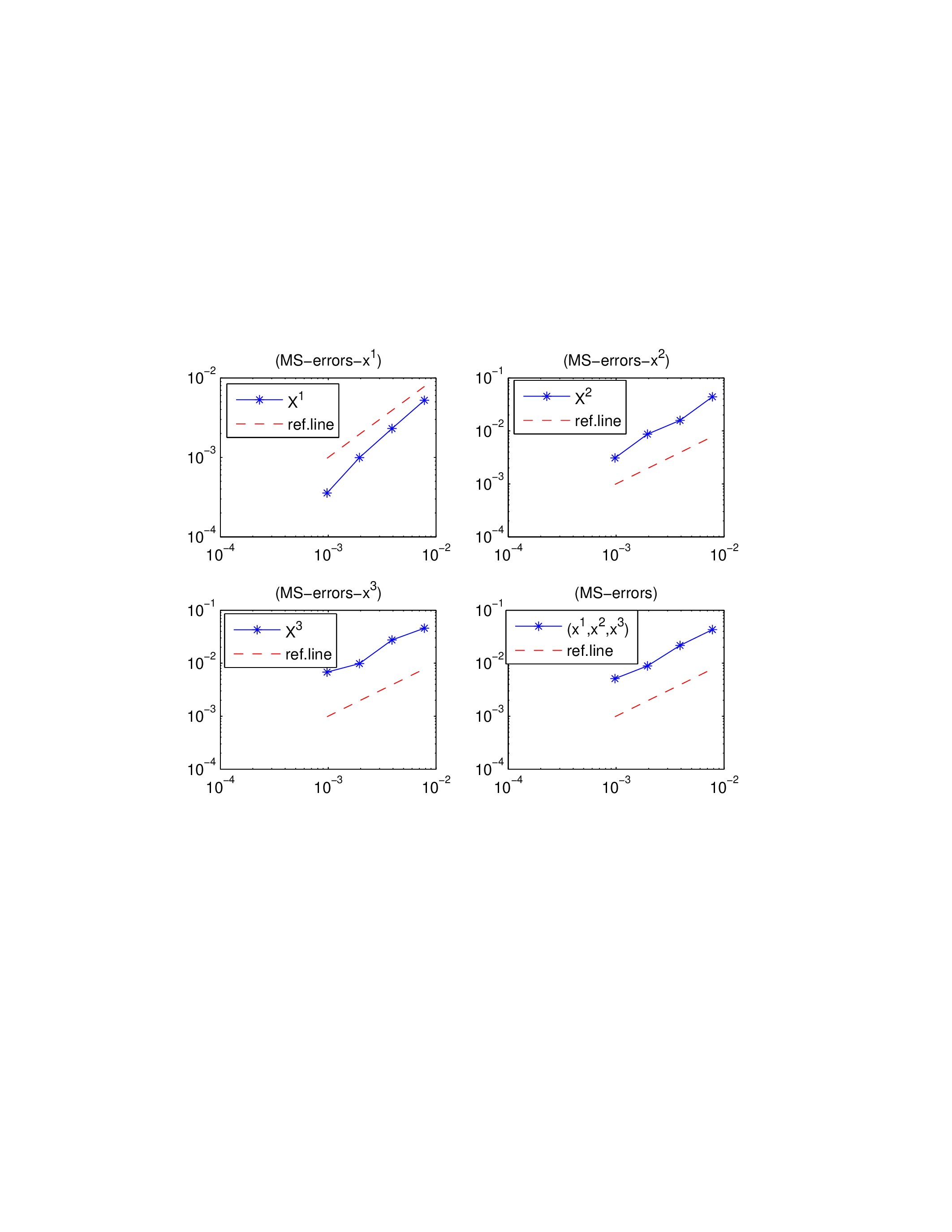}}
\subfigure{\includegraphics[width=0.48\textwidth]{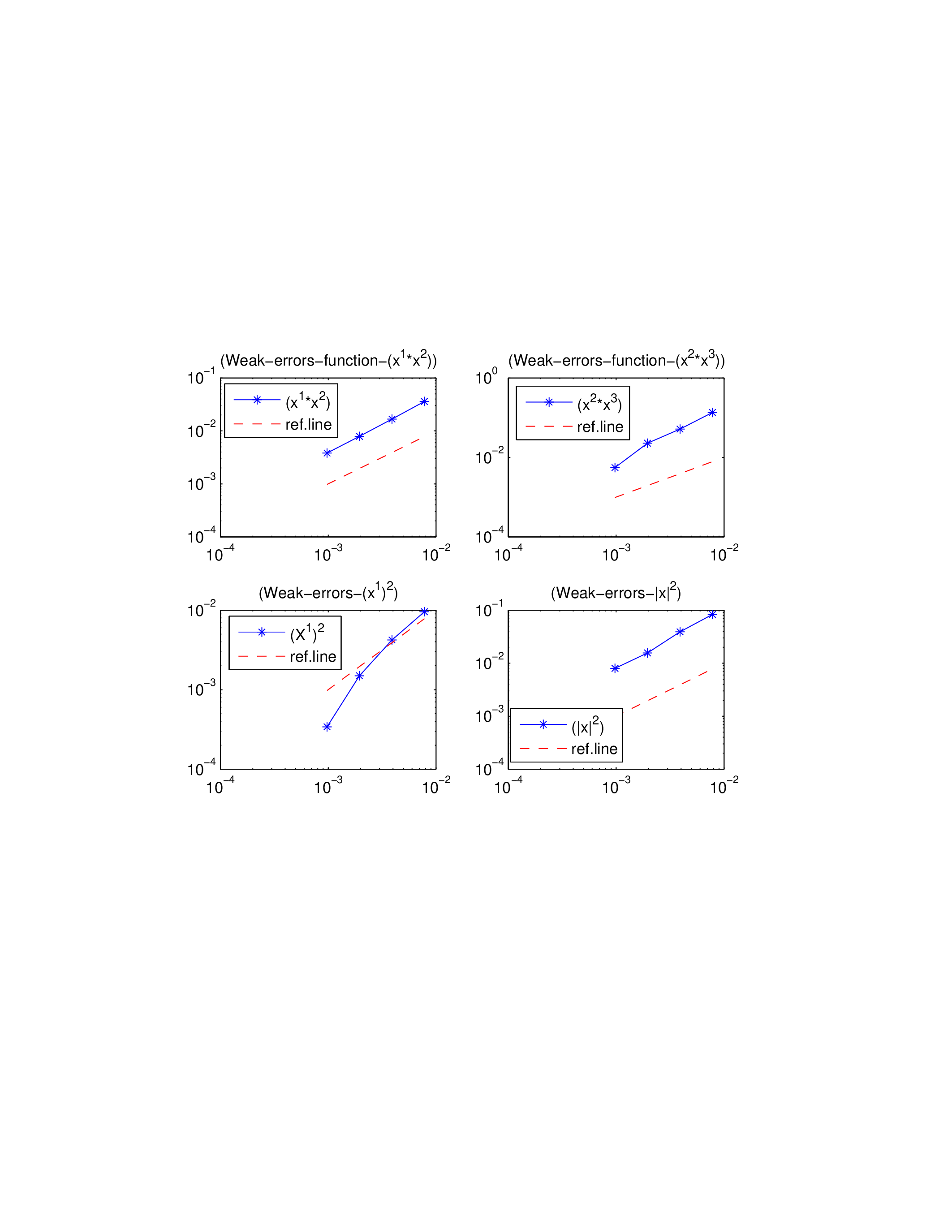}}
\caption{(Composition method \eqref{composition}.
Left: Mean-square order, Right: Weak order)
Endpoint errors versus decreasing step sizes
$h$ in log-log scale for the stochastic cyclic Lotka-Volterra system.
The reference lines have slopes $1$.}
  \label{E2figure3}
\end{figure}

The left part of Figure~\ref{E2figure3} presents
the mean-square errors.
The lines with $*$ represents the values of
$\displaystyle\frac{(E|y(T_{N})-y_{N}|^2)^{1/2}}{(E|y(T_{N})|^{2})^{1/2}}$ with $y$ being
$x^{1}$, $x^{2}$, $x^{3}$ or $x$.
The right part of Figure~\ref{E2figure3} presents the weak errors.
The lines with $*$ represents the values
of $\displaystyle\frac{|E(\psi(y(T_{N}))-\psi(y_{N}))|}{|E\psi(y(T_{N}))|}$
with the function $\psi(x)$
being $x^{1}x^{2}$, $x^{2}x^{3}$, $(x^{1})^{2}$ or $|x|^{2}$.
Again, the correct convergence orders are observed.

\section{Conclusion}
Based on the energy-preserving method for stochastic Poisson system \cite{Cohen2013}
and the equivalent skew gradient
system formulation of the original system \cite{Zhang2011}, we present a new invariant-preserving
method for general stochastic differential equations in the Stratonovich sense with
a conserved quantity.
%
We show that the invariant-preserving method converges with
accuracy order $1$ for commutative noise in mean-square sense.
In the commutative as well as non-commutative case, the weak convergence order of
the proposed method is $1$. Influences of the usage of a quadrature formula on the orders of convergence
are also investigated. Further, a conservative  composition method is studied:
mean-square convergence order $1$ for commutative noise and weak convergence order $1$ are obtained.
Finally, numerical experiments are presented to verify extend our theoretical results.
We will study multiple invariants-preserving methods for stochastic differential
equations in a future work.



\newpage

\end{document}